\newtheorem{theorem}{Theorem}[section]
\newtheorem{lemma}[theorem]{Lemma}
\newtheorem{prop}[theorem]{Proposition}
\newtheorem{proposition}[theorem]{Proposition}
\newtheorem{corollary}[theorem]{Corollary}
\theoremstyle{definition}
\newtheorem{defn}[theorem]{Definition}
\newtheorem{definition}[theorem]{Definition}
\newtheorem{remark}[theorem]{Remark}
\newtheorem{question}[theorem]{Question}
\numberwithin{equation}{section}
\def\ggg{\mathfrak{g}}
\def\gl{\mathfrak{gl}}
\def\sll{\mathfrak{sl}}
\def\calh{\mathcal{H}}
\def\cale{\mathcal{E}}
\def\ggg{\mathfrak{g}}
\def\fsl{\mathfrak{S}_l}
\def\fsm{\mathfrak{S}_m}
\def\bbq{\mathbb{Q}}
\def\buq{{\textbf{U}}_q}
\def\scrW{\mathscr{W}}
\def\End{{\mathrm{End}}}
\def\GL{\text{GL}}
\def\id{\mathsf{id}}
\def\uV{\underline{V}}
\def\uVm{\underline{V}^{\otimes m}}
\def\tsgm{\textsf{G}_{\text{m}}}
\newcommand{\dha}{\mathfrak{H}\kern -.6em\mathfrak{H}}
\def\tsb{\mathrm{\mathsf{B}}}
\def\tsa{\textsf{A}}
\begin{document}
	\title[duplex Hecke Algebras of type $B$]
	{duplex Hecke Algebras of type $B$}
	\author{Yu Xie, An Zhang and Bin Shu}
	\address{School of Mathematical Sciences, East China Normal University, No. 500 Dongchuan Rd., Shanghai 200241, China}
	\email{xieyuyoux@163.com}
	\address{School of Mathematical Sciences, East China Normal University, No. 500 Dongchuan Rd., Shanghai 200241, China}
    \email{52215500003@stu.ecnu.edu.cn}
    \address{School of Mathematical Sciences, Ministry of Education Key Laboratory of Mathematics and Engineering Applications \& Shanghai Key Laboratory of PMMP,  East China Normal University, No. 500 Dongchuan Rd., Shanghai 200241, China}
    \email{bshu@math.ecnu.edu.cn}
	\subjclass[2020]{20G05; 17B20;17B45; 17B50}
	\keywords{duplex Hecke algebras, $\imath$quantum groups, $q$-Schur-Weyl duality}
	
	\thanks{This work is supported by the National Natural Science Foundation of China (Grant No. 12071136 and 12271345), and by Science and Technology Commission of Shanghai Municipality (No. 22DZ2229014).}

	\begin{abstract} As a sequel to \cite{XZ}, in this article we first introduce a so-called duplex Hecke algebras of type $\tsb$ which is  a $\bbq(q)$-algebra associated with the Weyl group $\mathscr{W}(\tsb)$ of type $\tsb$, and symmetric groups $\mathfrak{S}_l$ for $l=0,1,\ldots,m$, satisfying some Hecke relations (see Definition \ref{defn: duplex H}).
		This notion originates from the degenerate duplex Hecke algebra arising from the course of study of a kind of Schur-Weyl duality of Levi-type (see \cite{SXY}), extending the duplex Hecke algebra
		of type $\tsa$  arising from the related $q$-Schur-Weyl duality of Levi- type (see \cite{XZ}).
		%
		A duplex Hecke algebra of type
		$\tsb$ admits natural representations on certain tensor spaces. We then establish a Levi-type $q$-Schur-Weyl duality of type $\tsb$, which reveals the double centralizer property between such duplex Hecke algebras and $\imath$quantum groups studied by Bao-Wang in \cite{BW}.
	\end{abstract}
	\maketitle

	\section*{Introduction}

	\subsection{Background} An algebraic group $G$ is called a semi-reductive group if $G$ is
	a semi-direct product of a reductive closed subgroup $G_{0}$ and
	the unipotent radical $U$. It is important to study semi-reductive
	algebraic groups and their Lie algebras in lots of cases when the
	underground filed is of characteristic $p>0$ (see \cite{OSY}). Set $\underline{G}=G\times_{\nu}V$,
	where $G=\GL(V)$ and $\nu$ be the natural representation of $G$
	on $V$. Then this enhanced group $\underline{G}$ naturally becomes a semi-reductive group (called an enhanced reductive group by Shu-Xue-Yao \cite{SXY}). In order to study
	polynomial representations of general linear groups, classical Schur
	algebras are produced. By analogy of this, the $m$th tensor representations
	of an enhanced group $\underline{G}$ naturally produce the so-called enhanced
	Schur algebra $\cale(n,m)$. In the course of study
	in representations of enhanced Schur algebras,  an algebraic model of degenerate duplex Hecke algebras is introduced  in \cite{SXY}, where it is
	denoted by $\mathcal{H}_{m}$. Roughly speaking, for $\underline{V}$ a one-dimensional extension of $V$ associated with $\GL(V)$, Shu-Xue-Yao established the following duality in \cite{SXY}:	
		$$\Xi(\mathcal{H}_{m})=\End_{\mathbb{C}\Phi(\GL_{n}\times
			\tsgm)}(\underline{V}^{\otimes m});$$
		$$\End_{\Xi(\mathcal{H}_{m})}(\underline{V}^{\otimes m})=\mathbb{C}\Phi(\GL_{n}\times \tsgm),$$	
	where $\GL_n$ and $\tsgm$ are the general linear group and the one-dimensional multiplication group, respectively, and  $\GL_n\times \tsgm$ is naturally regarded as a connected closed subgroup of $\underline{G}$,
	and $\Phi$ and $\Xi$ denote the natural representation of $\underline{G}$ and $\mathcal{H}_m$ on $\underline{V}^{\otimes m}$ respectively.	
	This duality is called a Levi-type Schur-Weyl duality.

	Recall that soon after quantum groups arising, Jimbo \cite{Jim86}
    established a duality between quantum groups and Hecke algebras of type $\tsa$,  which	is a $q$-deformation of classical Schur-Weyl duality. This duality is called $q$-Schur duality or Jimbo-Schur duality nowadays.
	
	Combining Jimbo-Schur duality and  Levi-type Schur-Weyl duality mentioned above, Xue-Zhang \cite{XZ} introduced duplex Hecke algebras (of type $\tsa$) and established the $q$-deformation of Levi-type Schur-Weyl duality\footnote{In \cite{XZ}, it is called the doubled Hecke algebra. Here that notion is modified into the current one ``duplex Hecke algebras," the latter of which can reflect more what it looks like. More  explanation about this notion can be refereed to \cite{SXY}.}. There is a natural question which seems nontrivial:
	\begin{question} \label{ques: 1.1}
		How to extend Xue-Zhang's work to the case of type $\tsb$, i.e.
		how to define doubled Hecke algebras of type $\tsb$, and how to establish a $q$-deformation of Levi-type Schur-Weyl duality?
	\end{question}

	\subsection{Our purpose}
	In this paper, our purpose is to answer the above question. We formulate a so-called duplex Hecke algebra of type $\tsb$, and establish a related $q$-deformation of Levi Schur-Weyl duality of type $\tsb$  by exploiting $\imath$quantum groups.

	Recall that a symmetric pair $(\mathfrak{g},\mathfrak{g}^{\theta})$ consists
	of a semisimple Lie algebra $\ggg$ and its fixed-point subalgebra under $\theta$ which is an involution on $\ggg$.
	The classification of irreducible symmetric pairs is equivalent to
	the classification of real forms of complex simple Lie algebras. Let
	$\textbf{U}=\textbf{U}_q(\mathfrak{g})$ be a quantum group associated with $\ggg$. According
	to Gail Letzter (cf. \cite{Letz}), a quantum symmetric pair $(\textbf{U},\textbf{U}_{q}^{\imath})$
	consists of a quantum group \textbf{U} and its right coideal subalgebra
	$\textbf{U}_{q}^{\imath}$ which specializes at $q\rightarrow1$ to	$\mathbf{U}(\mathfrak{g})$ and $\mathbf{U}(\mathfrak{g^\theta})$.
	We call $\textbf{U}_{q}^{\imath}$ an $\imath$quantum
	group. Bao-Wang (and Bao-Wang-Watanabe) establish a kind of
	Schur duality (called $\imath$Schur duality) between an $\imath$quantum
	group of type $\tsa$ and a Hecke algebra of type $\tsb$ (see \cite{BW, BWW}).

	\subsection{Main results}
	\subsubsection{} We establish a duplex Hecke algebra $\dha_m$ of type $\tsb$, which is a $\bbq(q)$-algebra associated with the Weyl group $\mathscr{W}(\tsb)$ of type $\tsb$ and symmetric groups $\mathfrak{S}_l$ for $l=0,1,\ldots,m$, satisfying some Hecke relations (see Definition \ref{defn: duplex H}).
	
	\subsubsection{}
	Based on  Bao-Wang's work along theirs jointly with Watanabe	 in  \cite{BW, BWW},
	we  introduce  a Levi-type subalgebra ${L}_{q}^{\imath}(\sll_{2r+4})$ of $\imath$quantum group $\textbf{U}_{q}^{\imath}(\sll_{2r+4})$ (see \S\ref{Levi-type}). This  becomes another important ingredient in our argument solving Question \ref{ques: 1.1}.
	
	\subsubsection{} Let $V$ be the $\bbq(q)$-vector space of dimension $2r+2$, and	let $\uV$ be a $2$-dimensional extension of $V$.
	Then there are both natural actions $\Phi$ and $\Xi$ of $L_{q}^{\imath}(\sll_{2r+4})$
	and $\dha_m$ on $\uV^{\otimes m}$ respectively (see \S\ref{natural rep} and \S\ref{sec action of h}).
	We  prove the following double centralizer theorem (see Theorems \ref{thm: main thm1} and  \ref{thm: main thm2}).
	\begin{align*}
		\Phi(L_{q}^{\imath}(\mathfrak{s}\mathfrak{l}_{2r+4}))&=
		\End_{\dha_m}(\underline{V}^{\otimes m});\cr
		\Xi(\dha_m)&=\End_{L_{q}^{\imath}
			(\mathfrak{s}\mathfrak{l}_{2r+4})}(\underline{V}^{\otimes m})^{\textsf{op}}.
	\end{align*}
	
	\subsection{The structure of the paper}
	The article contains three sections. In the first section,  we recall  some  basic concepts, including quantum groups, $\imath$quantum group,
	Hecke algebras and $\imath$Schur duality.
	In the second section, we introduce the duplex Hecke  algebra  $\dha_m$  of type $\tsb$ and define its representation on $\underline{V}^{\otimes m}$, and show that  this representation is well-defined. In the third section, we define a Levi-type subalgebra $L_{q}^{\imath}(\sll_{2r+4})$ of $\imath$quantum group $\textbf{U}_{q}^{\imath}(\mathfrak{s}\mathfrak{l}_{2r+4})$. We then establish the $\imath$Schur duality of Levi-type.

   Throughout the paper,  $q$ is a fixed  parameter which is assumed to be transcendental over $\bbq$.
   It is worth mentioning that the proof of Theorem \ref{thm: main thm1} and Lemma \ref{lem: 3.3} are dependent on  the condition of $q$ being transcendental over $\bbq$ (see \cite{DJ}).

\section{Preliminaries}

In this section, we recall some basic notions and introduce notations used in this article, particularly involving $\imath$quantum groups. For details, one can be referred to \cite{BW}, \cite{BWW} or \cite{LW}.
\subsection{Quantum groups }

\begin{defn} The quantum group $\textbf{U}_{q}(\mathfrak{g}\mathfrak{l}_{n})$ associated with $\gl_n$
	is the associative algebra generated by	all $E_{i}$, $F_{i}$, $D_{k}$, $D_{k}^{-1}$ with
    $1\leqslant i\leqslant n-1$ and $1\leqslant k\leqslant n$
	over ${\mathbb{Q}}(q)$, which satisfies the following relations
	
	\begin{align*}
		&D_{k}D_{k}^{-1}=1=D_{k}^{-1}D_{k};\;D_{k}D_{s}=D_{s}D_{k} \;(1\leq k,s\leq n);
		\cr
		&D_{i}E_{i}D_{i}^{-1}=qE_{i};\;D_{i}F_{i}D_{i}^{-1}=q^{-1}F_{i};\cr
		&D_{i+1}E_{i}D_{i+1}^{-1}=q^{-1}E_{i};\;D_{i+1}F_{i}D_{i+1}^{-1}=qF_{i};\cr
		&D_{k}E_{i}D_{k}^{-1}=E_{i}\;(k\neq i,i+1);\;D_{k}F_{i}D_{k}^{-1}=F_{i}\;(k\neq i,i+1);\cr
		&E_{i}F_{j}-F_{i}E_{j}=\delta_{ij}
		\frac{D_{i}D_{i+1}^{-1}-D_{i}^{-1}D_{i+1}}{q-q^{-1}};\cr
		&
		\begin{cases}
			E_{i}E_{j}=E_{j}E_{i}\cr
			F_{i}F_{j}=F_{j}F_{i}
		\end{cases}
		\text{ if } c_{ij}=0;
	\end{align*}
	and
	\begin{align*}
		\begin{cases}
			E_{i}^{2}E_{j}-(q+q^{-1})E_{i}E_{j}E_{i}+E_{j}E_{i}^{2}=0\cr
			F_{i}^{2}F_{j}-(q+q^{-1})F_{i}F_{j}F_{i}+F_{j}F_{i}^{2}=0
		\end{cases}
		\text{ if } c_{ij}=-1,
	\end{align*}
	where $C=(c_{ij})_{(n-1)\times(n-1)}$ is the  Cartan matrix of $\mathfrak{s}\mathfrak{l}{}_{n}$.
\end{defn}

\begin{defn} The quantum group
	$\buq(\sll_{n})$ associated with $\sll_n$
	is the $\mathbb{Q}(q)$-subalgebra of $\textbf{U}_{q}(\mathfrak{g}\mathfrak{l}_{n})$
	generated by	
	all $E_{i},F_{i},K_{i}:=D_{i}D_{i+1}^{-1}$ with $1\leq i\leq n-1.$
\end{defn}
\begin{remark}
	The quantum group $\buq(\gl_n)$
	is a Hopf algebra with comultiplication $\Delta:\buq
	(\gl_n)\rightarrow \buq(\gl_n)\otimes \buq(\gl_n)$
	given by
	\begin{align*}
		\Delta(D_{i})=D_{i}\otimes D_{i},\Delta(E_{i})=1\otimes E_{i}+E_{i}\otimes K_{i}^{-1},\Delta(F_{i})=F_{i}\otimes1+K_{i}\otimes F_{i}.
	\end{align*}
\end{remark}

\subsection{$\imath$Quantum groups}
Let $r$ be a given positive integer. Set
\begin{align*}
	\mathbb{I}_{2r+2}&:=\{-r-\frac{1}{2},-r+\frac{1}{2},\ldots, -\frac{1}{2},\frac{1}{2},\ldots, r-\frac{1}{2},r+\frac{1}{2}\},\cr
	\mathbb{I}_{2r+1}&:=\{-r,-r+1,\ldots, 0,\ldots, r-1,r\}.
\end{align*}

\begin{defn} Index the generators
	(or the simple roots) for $\buq(\mathfrak{s}\mathfrak{l}_{2r+2})$
	by $\mathbb{I}_{2r+1}$. Then the $\imath$quantum group $\buq^{\imath}(\sll_{2r+2})$  is  defined as the $\bbq(q)$-subalgebra of
	$\buq(\gl_{2r+2})$ generated by all
	\begin{alignat*}{2}
		k_{i}&:= K_{i}K_{-i}^{-1}\;&&(i\in\mathbb{I}_{2r+1}),\\
		B_{i}&:= E_{i}+F_{-i}K_{i}^{-1}\;&&(0\neq i\in\mathbb{I}_{2r+1}),\\
		B_{0}&:= E_{0}+qF_{0}K_{0}^{-1}+K_{0}^{-1}.
	\end{alignat*}
\end{defn}

\subsubsection{Natural modules}\label{natural rep} Let $V$ be the $\bbq(q)$-vector space spanned by $\{\eta_{i}\mid i\in\mathbb{I}_{2r+2}\}$, and $\uV$ be the $\bbq(q)$-vector space  spanned by $\{\eta_{j}\mid j\in\mathbb{I}_{2r+4}\}$, the latter of which is a two-dimensional extension of the former. Furthermore, there is a natural representation of $\buq(\sll_{2r+4})$ on $\uV$ via	
\begin{equation}
\begin{alignedat}{5}\label{action1.1}
	&F_{i}\eta_{i-\frac{1}{2}}=\eta_{i+\frac{1}{2}},&&
	E_{i}\eta_{i+\frac{1}{2}}=\eta_{i-\frac{1}{2}},\\
	&F_{i}\eta_{j}=0,&&E_{i}\eta_{j+1}=0,&&(j\neq i-\frac{1}{2}),\\
	&K_{i}\eta_{i-\frac{1}{2}}=q\eta_{i-\frac{1}{2}},&\quad&K_{i}\eta_{i+\frac{1}{2}}
	=q^{-1}\eta_{i+\frac{1}{2}},&&\\
	&K_{i}\eta_{j}=\eta_{j}&& &&(j\neq i\pm\frac{1}{2}).
\end{alignedat}
\end{equation}
Consequently,
\begin{equation}
\begin{alignedat}{3}\label{action1.2}
	&k_{i}\eta_{i-\frac{1}{2}}=q\eta_{i-\frac{1}{2}},&&
	k_{i}\eta_{-(i-\frac{1}{2})}
	=q\eta_{-(i-\frac{1}{2})},\cr
	&k_{i}\eta_{i+\frac{1}{2}}=q^{-1}\eta_{i+\frac{1}{2}},&&
	k_{i}\eta_{-(i+\frac{1}{2})}=q^{-1}\eta_{-(i+\frac{1}{2})},\cr
	&k_{i}\eta_{j}=\eta_{j}\quad\text{for }j\neq\pm(i\pm\frac{1}{2}).
\end{alignedat}
\end{equation}
Hence,  there is a $\buq^{\imath}(\sll_{2r+4})$-module structure on $\underline{V}$.
Let $m$ be a fixed integer not less than  2. It is clear
that $\underline{V}^{\otimes m}$ is also a $\textbf{U}_{q}(\mathfrak{s}\mathfrak{l}_{2r+4})$-module in a usual way. Precisely,  the action of $u\in\textbf{U}_{q}(\mathfrak{s}\mathfrak{l}_{2r+4})$ on $\underline{V}^{\otimes m}$
is defined as
$$\Delta^{m}(u):=(\Delta\otimes\id^{\otimes m-2})\circ\cdots\circ(\Delta\otimes\id)\circ\Delta(u).$$
Hereinafter we denote  $\overset{s}{\overbrace{\square \otimes\cdots\otimes \square}}$ by  $\square^{\otimes s}$  for an operator or a vector $\square$.
Then we have
\begin{equation}
\begin{aligned}\label{action2}
	&\Delta^{m}(K_{i})=K_i^{\otimes m}; \\
	&\Delta^{m}(E_{i})=\sum_{j=1}^m
	\id^{\otimes (j-1)}\otimes E_{i}\otimes  K_{i}^{\otimes (j-m)};\\
	&\Delta^{m}(F_{i})=\sum_{j=1}^m
	{K_{i}}^{\otimes(j-1)}\otimes F_{i}\otimes \id^{\otimes(m-j)},
\end{aligned}
\end{equation}
where  $K_i^{\otimes{(j-m)}}$ means $(K_i^{-1})^{\otimes (m-j)}$.

\subsection{Hecke algebras of type $\tsb$ }
Denote by $\mathbb{I}_{2r+4}^{m}$ the set of  $m$-tuples from $\mathbb{I}_{2r+4}$. This means
$$\mathbb{I}_{2r+4}^{m}:=\{(a_{1,}a_{2},\ldots,a_{m})\mid a_{i}\in\mathbb{I}_{2r+4}\text{ with } i=1,2,\ldots,m\}.$$
View $f\in\mathbb{I}_{2r+4}^{m}$ as a function $f:\{1,\ldots,m\}\rightarrow\mathbb{I}_{2r+4}$.
For any $f\in\mathbb{I}_{2r+4}^{m}$, we can write $f=(f(1),\ldots,f(m))$, and  define $M_{f}:=\eta_{f(1)}\otimes\eta_{f(2)}\otimes\cdots \otimes\eta_{f(m)}$.
Then the set $\{M_{f}\mid f\in\mathbb{I}_{2r+4}^{m}\}$ forms a basis for $\uV^{\otimes m}$.
Let $\scrW(\tsb_{m})$ be the Coxeter group of type $\tsb_{m}$ with simple
reflections $s_{j}$, $0\le j\le m-1$, while the subgroup generated by all
$s_{i}$ with $1\leq i\leq m-1$ is exactly $\scrW(\tsa_{m-1})$ coinciding with $\fsm$.

Let $\calh(\tsb_{m})$ be the Iwahori-Hecke algebra of type $\tsb_{m}$ over $\bbq(q)$. By definition, $\calh(\tsb_m)$  is generated by $H_{0}$, $H_{1}$, \ldots, $H_{m-1}$ as a $\bbq(q)$-algebra,
subject to the following relations,
\begin{alignat*}{3}
	&(H_{i}-q^{-1})(H_{i}+q)=0, &\text{for }& i\geq0;\\
	&H_{i}H_{i+1}H_{i}=H_{i+1}H_{i}H_{i+1}, &\text{for }& 0\leq i\leq m-2;\\
	&H_{i}H_{j}=H_{j}H_{i}, &\text{for }& |i-j|>1;\\
	&H_{0}H_{1}H_{0}H_{1}=H_{1}H_{0}H_{1}H_{0}.&&
\end{alignat*}
For  $\sigma\in \scrW(\tsb_{m})$ with a reduced expression $\sigma=s_{i_{1}}\cdot\cdot\cdot s_{i_{k}}$, the corresponding element  $H_{\sigma}:= H_{i_{1}}\cdot\cdot\cdot H_{i_{k}}\in\calh(\tsb_m)$ is well-defined, due to Matsumoto's theorem (see for example \cite{Geck}).

\subsubsection{} The group $\scrW(\tsb_{m})$ and its subgroup $\fsm$  naturally act  by transposition  on $\mathbb{I}_{2r+4}^{m}$ (from the right side):
for $0\le j\le m-1$ and any $f\in\mathbb{I}_{2r+4}^{m}$,
\begin{align*}
	f\cdot s_{j}=\begin{cases}
		(\ldots,f(j+1),f(j),\ldots), & \text{ if } j>0,\cr
		(-f(1),f(2),\ldots,f(m)), & \text{ if } j=0.
	\end{cases}
\end{align*}
\subsubsection{}
By a straightforward computation, there is correspondingly a right action of the Hecke algebra $\calh(\tsb_{m})$ on the $\bbq(q)$-vector space $\uV^{\otimes m}$ as below:
\begin{align}\label{eq: 1.2.2}
	M_{f}H_{i}=\begin{cases}
		q^{-1}M_{f}, & \text{ if } i\neq0 \text{  and } f(i)=f(i+1);\cr
		M_{fs_{i}}, & \text{ if } i\neq0 \text{  and } f(i)<f(i+1);\cr
		M_{fs_{i}}+(q^{-1}-q)M_{f}, & \text{ if } i\neq0 \text{ and } f(i)>f(i+1);\\
		M_{fs_{0}}, & \text{ if } i=0 \text{  and } f(1)>0;\cr
		M_{fs_{0}}+(q^{-1}-q)M_{f}, & \text{ if } i=0 \text{ and } f(1)<0.
	\end{cases}
\end{align}

\subsection{}\label{sec: 1.4} Recall the arguments in \S\ref{natural rep}.
The left action of $\textbf{U}_{q}(\mathfrak{s}\mathfrak{l}_{2r+4})$ restricts to $\textbf{U}_{q}^{\imath}(\mathfrak{s}\mathfrak{l}_{2r+4})$. Therefore $\textbf{U}_{q}^{\imath}(\mathfrak{s}\mathfrak{l}_{2r+4})$
and $\calh(\tsb_{m})$ have  left and right action on $\underline{V}^{\otimes m}$ respectively,
which we denote by $\Phi$ and $\Psi$ respectively. On the other hand, it is worthwhile reminding that
$\calh(\tsb_{l})$ has a right action on $V^{\otimes l}$. We will denote this
action of $\calh(\tsb_{l})$ on $V^{\otimes l}$ by $\Psi_{l}^{V}$.

The following is the remarkable $\imath$Schur duality established  by Bao-Wang.

\begin{theorem}\label{iduality} \rm{(\cite[\S 5.4]{BW})} Both $\Phi(\mathrm{\mathbf{U}}_q^\imath(\mathfrak{sl}_{2r+4}))$
	and $\Psi(\calh(\tsb_{m}))$ commute with each other, and they form double
	centralizers, i.e.,
	\[
	\End_{\mathrm{\mathbf{U}}_q^\imath(\mathfrak{sl}_{2r+4})}(\uV^{\otimes m})^{\mathrm{op}}=\Psi(\calh(\tsb_{m})),
	\]
	\[
	\End_{\calh(\tsb_{m})}(\underline{V}^{\otimes m})=\Phi(\mathrm{\mathbf{U}}_q^\imath(\mathfrak{sl}_{2r+4})).
	\]
\end{theorem}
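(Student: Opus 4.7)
The plan is a standard double centralizer argument split into two steps: (a) check that the two actions commute, and (b) establish one of the centralizer equalities by matching direct-sum decompositions, from which the other equality is automatic via the double commutant theorem for semisimple algebras. Since $q$ is transcendental over $\bbq$, both $\calh(\tsb_m)$ and $\End_{\buq^\imath(\sll_{2r+4})}(\uV^{\otimes m})$ are semisimple (invoking Dipper--James--Murphy here, consistent with the paper's standing assumption), so this reduction is legitimate.

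For step (a), I would observe that the subalgebra of $\calh(\tsb_m)$ generated by $H_1,\ldots,H_{m-1}$ is the type-$\tsa_{m-1}$ Hecke algebra, whose commutation with the $\buq(\sll_{2r+4})$-action on $\uV^{\otimes m}$ is precisely Jimbo's $q$-Schur--Weyl duality, hence a fortiori commutes with the subalgebra $\buq^\imath(\sll_{2r+4})$. The content of (a) is therefore reduced to verifying that $\Psi(H_0)$ commutes with each generator $k_i$, $B_i$ ($i\neq 0$), and $B_0$. Using the explicit coproduct \eqref{action2} and the piecewise formulas \eqref{eq: 1.2.2}, the point is that $\Psi(H_0)$ acts only on the first tensor slot, effecting (up to correction terms) the involution $\eta_a\mapsto\eta_{-a}$, while the $\imath$-generators $B_i = E_i + F_{-i}K_i^{-1}$ are by construction symmetric under $i\mapsto -i$; this built-in symmetry is what makes the two actions compatible, and the verification reduces to a finite case-check on monomial vectors $M_f$ stratified by the sign and magnitude of $f(1)$.

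For step (b), I would decompose $\uV^{\otimes m}$ as a $(\buq^\imath(\sll_{2r+4}),\calh(\tsb_m))$-bimodule. The irreducible summands are parametrized by bipartitions $(\lambda^+,\lambda^-)$ with $|\lambda^+|+|\lambda^-|=m$ and column lengths bounded by $r+2$; on the $\calh(\tsb_m)$-side these are the Specht-type simples from Kazhdan--Lusztig cells of type $\tsb_m$, while on the $\imath$quantum side they are the finite-dimensional simples for $\buq^\imath(\sll_{2r+4})$ obtained via the $\imath$canonical basis of Bao--Wang. A multiplicity-free matching then gives $\End_{\buq^\imath(\sll_{2r+4})}(\uV^{\otimes m})^{\mathrm{op}}=\Psi(\calh(\tsb_m))$, and semisimple double commutation yields the opposite equality.

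The main obstacle will be step (b), specifically the identification of the irreducible $\buq^\imath(\sll_{2r+4})$-summands of $\uV^{\otimes m}$ together with the correct multiplicities. Unlike the type-$\tsa$ case there is no immediately available highest-weight theory for $\buq^\imath$; the cleanest route is the $\imath$canonical basis constructed in \cite{BW} (and completed in \cite{BWW}), which transports the Hecke-side cell structure to the quantum side. For the present note it is reasonable to cite this machinery rather than reconstruct it; the novel technical work in the paper lies in extending the argument to the Levi subalgebra $L_q^\imath(\sll_{2r+4})$ and the duplex Hecke algebra $\dha_m$, for which the present Bao--Wang statement serves as the ``ambient'' duality.
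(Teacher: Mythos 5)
The paper does not actually prove Theorem~\ref{iduality}: it is stated as known background and attributed directly to Bao--Wang \cite[\S 5.4]{BW} (with the multiparameter version in \cite{BWW}). There is therefore no internal proof to compare against; the ambient $\imath$Schur duality is imported wholesale, and the paper's own contribution starts with the Levi subalgebra $L_q^\imath(\sll_{2r+4})$ and the duplex Hecke algebra $\dha_m$ in \S\S2--3.

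Your sketch is nevertheless a fair reconstruction of the shape of the argument in the cited sources, so two remarks. For step~(a), the reduction to checking that $\Psi(H_0)$ commutes with $\Phi(k_i)$, $\Phi(B_i)$ is correct and is how the commutativity is verified in \cite{BW}; but be careful with the phrase that $B_i=E_i+F_{-i}K_i^{-1}$ is ``symmetric under $i\mapsto -i$'' --- individually $B_{-i}=E_{-i}+F_iK_{-i}^{-1}\neq B_i$, so the relevant compatibility with the sign flip $\eta_a\mapsto\eta_{-a}$ is a property of the actions, visible only after the case check on $M_f$, not a formal invariance of the generators. For step~(b), you have correctly identified where the substance lies, but your ``multiplicity-free matching'' of bipartition-indexed simples on both sides is essentially the theorem being proved; Bao--Wang's actual route goes through the intertwiner and the $\imath$canonical basis and is considerably heavier than a combinatorial bijection, so this step should be read as a citation to \cite{BW,BWW} rather than as an independent argument. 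Finally, once commutation and semisimplicity over $\bbq(q)$ are in hand, the cleanest formal closure is to establish one inclusion and invoke the double commutant theorem \cite[\S 4.1.13]{GW} --- which is exactly the pattern the paper does use for its new Theorem~\ref{thm: main thm2}, and which you could mirror here.
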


\vskip5pt
\section{Duplex Hecke algebras}
In this section, we will introduce duplex Hecke algebras (of type
$\tsb$). This concept originates from degenerate duplex Hecke algebras
in \cite{SXY}, extending the notion of duplex Hecke algebras of type $\tsa$ in \cite{XZ}.

Recall that the symmetric group  $\fsl$ of $l$ letters  ($l\geq2$) has a set of canonical generators $\{s_{1}, \ldots, s_{l-1}\}$ with relations $s_i^2=\id$ for $i=1,\ldots,l-1$,  $s_is_j=s_js_i$ if  $i,j=1,\ldots,l-1$ satisfying $|i-j|>1$, and $s_is_{i+1}s_i=s_{i+1}s_is_{i+1}$ for $i=1,\ldots,l-2$.
Moreover, we appoint that $\frak{S}_1=\frak{S}_{0}=\{\id\}$, the identity group,  for convenience.

\subsection{Basic Definition}
\begin{defn}\label{defn: duplex H}
	A duplex Hecke algebra $\dha_m$	is a $\mathbb{Q}(q)$-algebra generated
	by  all $T_{i}$, with $i=0,1,\ldots\ ,m-1$, and all $x_{\sigma}^{(l)}$, with $\sigma\in \fsl$ for $0\protect\leq l\protect\leq m$,
	subject to the follow relations:
	
\begin{equation}
	\begin{alignedat}{3}\label{dha def1}
	&(T_{i}+q)(T_{i}-q^{-1})=0,\quad &\text{for }& 0\leq i\leq m-1;\\
	&T_{i}T_{i+1}T_{i}=T_{i+1}T_{i}T_{i+1}, &\text{for }& 0\leq i\leq m-2;\\
	&T_{i}T_{j}=T_{j}T_{i}, &\text{for }& |i-j|>1;\\
	&T_{0}T_{1}T_{0}T_{1}=T_{1}T_{0}T_{1}T_{0}.&&
    \end{alignedat}
\end{equation}
	and
	\begin{equation}
	\begin{aligned}\label{dha def2}
		x_{\sigma}^{(l)}x_{s_{i}}^{(l)}&=\begin{cases}
			x_{\sigma s_{i}}^{(l)}, & l(\sigma s_{i})=l(\sigma)+1;\cr
			x_{\sigma s_{i}}^{(l)}+(q^{-1}-q)x_{\sigma}^{(l)}, & l(\sigma s_{i})=l(\sigma)-1;
		\end{cases}
		\cr	
		x_{s_{i}}^{(l)}x_{\sigma}^{(l)}&=\begin{cases}
			x_{s_{i}\sigma}^{(l)}, & l(s_{i}\sigma )=l(\sigma)+1;\\
			x_{s_{i}\sigma}^{(l)}+(q^{-1}-q)x_{\sigma}^{(l)}, & l(s_{i}\sigma )=l(\sigma)-1;
		\end{cases}\\
        x_{\sigma}^{(l)}x_{\gamma}^{(k)}&=0,\quad l\neq k,
	\end{aligned}
\end{equation}
	along with
\begin{equation}
	\begin{alignedat}{3}\label{dha def3}
		T_{i}x_{\sigma}^{(l)}&=x_{s_{i}}^{(l)}x_{\sigma}^{(l)},&\;0<i<l;\\
		x_{\sigma}^{(l)}T_{i}&=x_{\sigma}^{(l)}x_{s_{i}}^{(l)},&\;0<i<l;\\
		T_{i}x_{\sigma}^{(l)}&=q^{-1}x_{\sigma}^{(l)}=x_{\sigma}^{(l)}T_{i},&i>l.
	\end{alignedat}
\end{equation}
\end{defn}

\begin{remark}\label{rem: 2.2}
     From the formula (\ref{dha def1}), $\calh(\tsb_m)$ can be imbedded into  $\dha_m$ as a subalgebra, and we can regard $T_i$ as an element in $\calh(\tsb_m)$. In particular, all $T_i$ with $i=0,1,\ldots,m-1$ are invertible.

\end{remark}

\subsection{Canonical representation of $\dha_m$ on ${\uV}^{\otimes m}$}\label{subsection2.2}
 Keep the notations as before. In particular, we keep the notations in \S\ref{natural rep}.  In order to endow an $\dha_m$-module structure on  ${\uV}^{\otimes m}$, we first introduce a decomposition of ${\uV}^{\otimes m}$.
Let $n$ be a nonnegative integer. Set $\underline{n}:=\{1,2,\cdots,n-1,n\}$
for $n>0$ and $\underline{0}=\emptyset$, where $\emptyset$
denotes the empty set.

For $I\subset\underline{m}$ with $\#I=l$, and  $J\subset\underline{m}\backslash I$,
 we set $\underline{V}_{I,J}^{\otimes m}$ to be the subspace of ${\uV}^{\otimes m}$ spanned by all $M_{f}:=\eta_{f(1)}\otimes\eta_{f(2)}\otimes\cdots\otimes
\eta_{f(m)}$ with $f(k)\in\mathbb{I}_{2r+2}$ for $k\in I$, $f(k)=-r-\frac{3}{2}$ for $k\in J$, and $f(k)=r+\frac{3}{2}$ for  $k\in\underline{m}\backslash (I\cup J)$, and let
$$\underline{V}_{l}^{\otimes m}=\underset{\#I=l,J\subset\underline{m}\backslash I}{\bigoplus}\underline{V}_{I,J}^{\otimes m}.$$
 In the set-up above, ${\uV}^{\otimes m}$ can be decomposed into a direct sum of subspaces
$${\uV}^{\otimes m}=\bigoplus_{l=0}^m \uV_{l}^{\otimes m}=\bigoplus_{I,J\subset\underline{m}, I\cap J=\emptyset}\uV_{I,J}^{\otimes m}. $$
From now on, we suppose the integer $l\in \{0,1,\ldots,m\}$. Notice that we have  $\underline{V}_{\underline{l}, \emptyset}^{\otimes m}=V^{\otimes l}\otimes\eta_{r+\frac{3}{2}}^{\otimes m-l}$.

\subsubsection{}\label{sec: 2.2.1}
Keep the notations from \S\ref{sec: 1.4}. For $I\subset\underline{m}$
with $\#I=l,$ $J\subset\underline{m}\backslash I$, there exists a linear transformation $\omega_{I,J}$ mapping $\underline{V}_{I,J}^{\otimes m}$ onto $\uV_{\underline{l},\emptyset}^{\otimes m}$.
 Actually, this $\omega_{I,J}$ can be expressed as a composition  of $T_{i}^{\pm1}$ with $0\leq i\leq m-1$ and acts on $\underline{V}_{I,J}^{\otimes m}$ by natural representation $\Psi$ of Hecke algebra $\calh(\tsb_m)$.
We can regard it as an element in $\dha_m$ (see Remark \ref{rem: 2.2}). We explain this operation in the following lemma.

\begin{lemma}\label{lem: 3.2} For $I\subset\underline{m}$
	with $\#I=l,$ $J\subset\underline{m}\backslash I$, there exists an
	element $\omega_{I,J}$ in $\dha_m$ generated by some $T_{i}^{\pm1}$ with $i\in\{0,1,\ldots,m-1\}$
	such that $\underline{V}_{\underline{l},\emptyset}^{\otimes m}=\Psi(\omega_{I,J})(\underline{V}_{I,J}^{\otimes m})$.
\end{lemma}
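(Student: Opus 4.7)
The plan is to construct $\omega_{I,J}$ explicitly as a product of the generators $T_0^{\pm 1},T_1^{\pm 1},\ldots,T_{m-1}^{\pm 1}$, with each factor chosen so that it acts on the relevant basis vector as a pure swap or sign flip (i.e.\ with no $(q^{-1}-q)$ correction term in (\ref{eq: 1.2.2})). Write $I=\{i_1<\cdots<i_l\}$, $J=\{j_1<\cdots<j_p\}$ and $K=\underline{m}\setminus(I\cup J)=\{k_1<\cdots<k_{m-l-p}\}$, and let $w_{I,J}\in\mathscr{W}(\tsb_m)$ be the signed permutation whose right action on $\mathbb{I}_{2r+4}^m$ sends any $f$ of shape $(I,J)$ to the function with values $f(i_t)$ at position $t$ for $1\le t\le l$ and $r+\tfrac32$ at every position $t>l$. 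Concretely, $w_{I,J}$ factors as an order-preserving shuffle $\sigma\in\mathfrak{S}_m$ (carrying $[1,l]$, $[l+1,l+p]$, $[l+p+1,m]$ onto $I$, $J$, $K$ respectively) composed with the sign flips $\epsilon_{l+1},\ldots,\epsilon_{l+p}$ at the $J$-block positions of the sorted configuration.

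The realization of $w_{I,J}$ as a word in $T_i^{\pm 1}$ proceeds in two stages. For $\sigma$: because $\sigma$ preserves the internal order of each of $I,J,K$, every inversion of $\sigma$ spans two distinct types, so any reduced expression $\sigma=s_{a_1}\cdots s_{a_N}$ consists of adjacent transpositions that each swap positions of different types. The three type-values satisfy $-r-\tfrac32<v<r+\tfrac32$ for every $v\in\mathbb{I}_{2r+2}$, so the sign of the comparison at positions $(a_j,a_j+1)$ is determined by the pair of types alone; by (\ref{eq: 1.2.2}) we then take $T_{a_j}$ in the ascending case and $T_{a_j}^{-1}$ in the descending case to obtain a pure swap $M_g\mapsto M_{gs_{a_j}}$. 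For the sign flips $\epsilon_{l+t}$: use the palindromic expression $\epsilon_{l+t}=s_{l+t-1}\cdots s_1 s_0 s_1\cdots s_{l+t-1}$, applied in order $t=1,2,\ldots,p$. At the inner $s_0$ step, position $1$ carries the value $-r-\tfrac32$ (the $J$-value having been walked leftward), so by (\ref{eq: 1.2.2}) we use $T_0^{-1}$ for a pure flip $-r-\tfrac32\mapsto r+\tfrac32$; each flanking $s_i$ ($i>0$) swaps this traveling extremal value with a neighbor of known type and is again handled purely by the same rule.

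Taking $\omega_{I,J}$ to be the resulting word, an immediate induction on its length yields $\Psi(\omega_{I,J})(M_f)=M_{f\cdot w_{I,J}}$ for every basis vector $M_f$ of $\uV_{I,J}^{\otimes m}$. Since $f\mapsto f\cdot w_{I,J}$ gives a bijection between the indexing sets for the bases of $\uV_{I,J}^{\otimes m}$ and $\uV_{\underline{l},\emptyset}^{\otimes m}$, the asserted image equality follows at once. The only delicate point, and thus the main obstacle, is the combinatorial sequencing in the middle paragraph: one must arrange the transpositions and sign flips so that at each intermediate stage the two positions being acted on have types separating their $f$-values, thereby forcing the $T_i$-versus-$T_i^{-1}$ choice and killing the correction term in (\ref{eq: 1.2.2}). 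Imposing the discipline \emph{shuffle first, then flip the post-sort $J$-positions one at a time from left to right} makes every such choice unambiguous and reduces the rest to direct verification.
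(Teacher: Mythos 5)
Your construction is correct in its conclusion but takes a genuinely different route from the paper, which argues by induction on $\#J$: for $J\neq\emptyset$ it walks the minimal $J$-index $j$ to position $1$ via $T_{j-1}^{-1}\cdots T_1^{-1}$, flips there, and recurses on a pair $(I',J')$ with $\#J'$ one smaller, sorting $I$ into $\underline{l}$ only in the base case $J=\emptyset$. You instead realize the whole signed permutation $w_{I,J}$ at once (order-preserving shuffle, then flip the post-sort $J$-block one position at a time); your scheme is direct and non-recursive, the paper's is inductive, and both land on the same lemma. One point in your write-up is not literally true, however: on the walk-back leg of $\epsilon_{l+t}$ for $t\ge2$, the travelling $r+\tfrac32$ passes positions $l+1,\ldots,l+t-1$, which at that stage already carry $r+\tfrac32$ from the earlier flips, so the comparison in (\ref{eq: 1.2.2}) falls into the \emph{equal} case and $T_i^{\pm1}$ produces a scalar $q^{\mp1}$ rather than a pure swap. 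Consequently $\Psi(\omega_{I,J})(M_f)$ equals a nonzero constant (depending only on $(I,J)$, not on $f$) times $M_{f\cdot w_{I,J}}$, not $M_{f\cdot w_{I,J}}$ itself; the image equality the lemma asserts is unaffected, so this is a minor overstatement rather than a fatal gap, but your sentence claiming that at every intermediate stage the two positions have ``types separating their $f$-values'' is not correct as written. The paper's flip-first ordering sidesteps this entirely, since the walked value $-r-\tfrac32$ only ever meets strictly larger neighbours. Finally, your use of $T_0^{-1}$ at the inner $s_0$ step is the right choice: the entry at position $1$ is $-r-\tfrac32<0$, so by (\ref{eq: 1.2.2}) $H_0$ carries a $(q^{-1}-q)$-correction while $H_0^{-1}$ acts purely; the $T_0$ in the paper's displayed word should accordingly read $T_0^{-1}$.
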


\begin{proof}
    We construct the desired element by induction on $\#J$. 
    
     In the case when  $\#J=0$, i.e. $J=\emptyset$, we take $r$ to be the minimal element of $\underline{m}\backslash I$. If further, $I=\underline{l}$, then we have  nothing to do.  Now we suppose $I \neq \underline{l}$. Take the minimal element $i$ in $I$ with $i>r$.
    For any $\eta_{f(1)}\otimes\eta_{f(2)}\otimes\cdots\otimes
	\eta_{f(m)}\in \uV_{I,\emptyset}^{\otimes m}$, we have $f(k)>f(i)$ for any $r\leq k<i$ . Then by applying  $\Psi$,  
	  we have
	$$\Psi(T_{i-1}^{-1}T_{i-2}^{-1}\cdots T_{r}^{-1})(\uVm_{I,\emptyset})=\Psi(T_{r}^{-1})\Psi(T_{r+1}^{-1})\cdots\Psi(T_{i-1}^{-1})(\uVm_{I,\emptyset})=\uVm_{I',\emptyset},$$
	where $I'=(I\backslash \{i\})\cup \{r\}$. Applying operation repeatedly in the case $I'$, we can change index $I$ into $\underline{l}$, which shows $\uVm_{\underline{l},\emptyset}=\Psi(\omega_{I,\emptyset})(\uVm_{I,J})$ for some $\omega_{I,\emptyset}$ as desired.
	
	Next, consider the case when $\#J>0$.   Then there exists a minimal element $j\in J$, i.e, all $\eta_{f(1)}\otimes\eta_{f(2)}\otimes\cdots\otimes
	\eta_{f(m)}\in \uV_{I,J}^{\otimes m}$ satisfy $f(i)>f(j)$ for any $i<j$. Then by applying the action $\Psi$, we have
	$$\Psi(T_{j-1}^{-1}T_{j-2}^{-1}\cdots T_{1}^{-1}T_{0})(\underline{V}_{I,J}^{\otimes m})=\Psi(T_{0})\Psi(T_{1}^{-1})\cdots\Psi(T_{j-1}^{-1})(\uV_{I,J}^{\otimes m})=\uV_{I',J'}^{\otimes m},$$
	where $I'$ and $J'$ is some subset of $\underline{m}$ with $\#J'=\#J-1$. We can now apply induction to $\uV_{I',J'}^{\otimes m}$ such that  $\underline{V}_{\underline{l},\emptyset}^{\otimes m}=\Psi(\omega_{I',J'})(\underline{V}_{I',J'}^{\otimes m})$ for some $\omega_{I',J'}$. Thus $\omega_{I,J}=T_{j-1}^{-1}T_{j-2}^{-1}\cdots T_{1}^{-1}T_{0}\omega_{I',J'}$ as desired.
\end{proof}

\subsubsection{}\label{sec action of h}

Note that ${\uV}^{\otimes m}=\bigoplus_{l=0}^m \uV_{l}^{\otimes m}=\bigoplus_{I,J\subset\underline{m}, I\cap J=\emptyset}\uV_{I,J}^{\otimes m}.$ By using Lemma \ref{lem: 3.2}, we can only consider the actions on $\uV_{\underline{l},\emptyset}$ and then extend $\Psi$. We can obtain an $\dha_m$-module structure on $\uVm$. Such an extension seems natural by the structure of duplex Hecke algebras. Recall that $\Psi_l^V$ in $\S\ref{sec: 1.4}$ denotes the natural representation of $\calh(\tsb_{l})$ on $V^{\otimes l}$.

\begin{prop}\label{prop1}
	There exists a unique $\mathbb{Q}(q)$-algebra
	anti-homomorphism $\Xi:\dha_m\rightarrow \End(\uVm)$
	such that
		\begin{equation}
	\begin{aligned}\label{eq: defining Xi}
		&\Xi(T_{i})=\Psi(H_{i}),&&\text{for } i=0,1,2,\ldots\ ,m-1;\\
		&\Xi(x_{\sigma}^{(l)})|_{\underline{V}_{\underline{l},\emptyset}^{\otimes m}}=\Psi_{l}^{V}(H_{\sigma})\otimes \id^{\otimes m-l}, &&\text{for } \sigma\in \fsl,0\leq l\leq m;\\
		&\Xi(x_{\sigma}^{(l)})(M_{f})=0, &&\text{for } M_{f}\notin\underline{V}_{\underline{l},
			\emptyset}^{\otimes m}.
	\end{aligned}
	\end{equation}
\end{prop}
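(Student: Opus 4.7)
The plan is to define $\Xi$ on the generators of $\dha_m$ by the formulas (\ref{eq: defining Xi}) and to extend by the anti-homomorphism property; uniqueness then holds automatically, since the $T_i$ and the $x_\sigma^{(l)}$ generate the algebra. The content of the proposition is to verify that this prescription is compatible with each of the defining relations (\ref{dha def1})--(\ref{dha def3}), read in the reversed form appropriate for an anti-homomorphism ($\Xi(ab)=\Xi(b)\Xi(a)$).

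Relations (\ref{dha def1}) come for free: $\Xi(T_i)=\Psi(H_i)$, and $\Psi$ is already the right action of $\calh(\tsb_m)$ on $\uV^{\otimes m}$ prescribed by (\ref{eq: 1.2.2}), so the Hecke-$\tsb_m$ identities hold. For relations (\ref{dha def2}), I would use the decomposition $\uV^{\otimes m}=\bigoplus_{I,J}\uV_{I,J}^{\otimes m}$: the operator $\Xi(x_\sigma^{(l)})$ vanishes off $\uV_{\underline{l},\emptyset}^{\otimes m}$ and coincides with $\Psi_l^V(H_\sigma)\otimes \id^{\otimes m-l}$ on it, so the type-$\tsa_{l-1}$ Hecke multiplication rules among the $H_\sigma$ with $\sigma\in\fsl$ transfer directly to the rules among the $x_\sigma^{(l)}$; off $\uV_{\underline{l},\emptyset}^{\otimes m}$ both sides vanish. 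The annihilation $x_\sigma^{(l)}x_\gamma^{(k)}=0$ with $l\neq k$ follows because the image of $\Xi(x_\sigma^{(l)})$ lies in $\uV_{\underline{l},\emptyset}^{\otimes m}$, while $\uV_{\underline{l},\emptyset}^{\otimes m}\cap\uV_{\underline{k},\emptyset}^{\otimes m}=0$ for $l\neq k$ (the two subspaces impose incompatible conditions ``value in $V$'' versus ``value equal to $\eta_{r+3/2}$'' at the coordinates between $\min(l,k)+1$ and $\max(l,k)$).

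For the compatibility relations (\ref{dha def3}) the restriction of $\Psi(H_i)$ to $\uV_{\underline{l},\emptyset}^{\otimes m}=V^{\otimes l}\otimes\eta_{r+3/2}^{\otimes m-l}$ is the decisive computation. When $0<i<l$, positions $i$ and $i+1$ both lie in $\underline{l}$ and carry values in $\mathbb{I}_{2r+2}$ on any basis vector of $\uV_{\underline{l},\emptyset}^{\otimes m}$; formula (\ref{eq: 1.2.2}) then shows $\Psi(H_i)$ preserves $\uV_{\underline{l},\emptyset}^{\otimes m}$ and acts there as $\Psi_l^V(H_i)\otimes\id^{\otimes m-l}$. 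When $i>l$, positions $i$ and $i+1$ both hold $\eta_{r+3/2}$, so (\ref{eq: 1.2.2}) yields the scalar $q^{-1}$. Either way, both sides of the relevant relation in (\ref{dha def3}) agree on $\uV_{\underline{l},\emptyset}^{\otimes m}$ by the corresponding identities inside $\calh(\tsb_l)$.

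The main obstacle is the verification off $\uV_{\underline{l},\emptyset}^{\otimes m}$: I must show that when $v\notin \uV_{\underline{l},\emptyset}^{\otimes m}$, the vector $\Psi(H_i)(v)$ also lies outside $\uV_{\underline{l},\emptyset}^{\otimes m}$, so that $\Xi(x_\sigma^{(l)})$ annihilates it and both sides of the reversed forms of (\ref{dha def3}) vanish. This amounts to a careful case analysis of the Hecke action (\ref{eq: 1.2.2}) tracking the type $(I_f,J_f)$ of each basis vector $M_f$: since $M_f\cdot H_i$ is always a linear combination of $M_f$ and $M_{fs_i}$, one checks in each configuration of the pair $(f(i),f(i+1))$ that neither summand can acquire type $(\underline{l},\emptyset)$ when $M_f$ does not. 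The crucial observation is that in the two ranges of $i$ treated by (\ref{dha def3}) the indices $i$ and $i+1$ are either both in $\underline{l}$ (when $0<i<l$) or both outside $\underline{l}$ (when $i>l$), so the swap $f\leftrightarrow fs_i$ cannot produce or erase a single obstructing index in $\underline{l}\setminus I_f$ or $J_f$. Once this complement invariance is established, $\Xi$ is a well-defined anti-homomorphism on all of $\dha_m$.
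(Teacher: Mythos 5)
Your proposal is correct and follows essentially the same route as the paper's proof: define $\Xi$ on generators, observe uniqueness is automatic, and then verify each block of relations (\ref{dha def1})--(\ref{dha def3}) in reversed order by restricting to $\uV_{\underline{l},\emptyset}^{\otimes m}$ and checking that $\Psi(H_i)$ preserves its complement in the relevant ranges of $i$. Your closing analysis of why the swap $f\leftrightarrow fs_i$ cannot move a basis vector into or out of $\uV_{\underline{l},\emptyset}^{\otimes m}$ when $0<i<l$ or $i>l$ is just a more explicit rendering of the step the paper phrases as ``$\Xi(T_i)(M_f)\notin\uV_{\underline{l},\emptyset}^{\otimes m}$.''
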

\begin{proof}

Since the action of generators of $\dha_m$ are determined by formula (\ref{eq: defining Xi}), the uniqueness is obvious if this homomorphism exists.
It is enough to show that the right action $\Xi$  of $\dha_m$
on $\underline{V}^{\otimes m}$
keeps  relations
(\ref{dha def1})-(\ref{dha def3}).

 With the start point of extending $\Psi$ to $\Xi$,  $\Xi|_{\langle T_{i},i=0,1,...,m-1\rangle}$ is exactly the representation $\Psi$ of $\calh(\tsb_{m})$.  So $\Xi$ keeps relations (\ref{dha def1}).

For $0<i<l$. Recall that in this case,
the action of $H_{i}$ on $$M_{f}=\eta_{f(1)}\otimes\eta_{f(2)}\otimes\cdots\otimes\eta_{f(m)}$$
only changes the first
$l$ factors of $M_{f}$  which belongs to $\uV_{\underline{l},\emptyset}^{\otimes m}$. If $M_f\in \uV_{\underline{l},\emptyset}^{\otimes m}$,  we then have
 $$\Xi(T_{i})(M_{f})=\Psi(H_{i})(M_{f})=\Psi_{l}^{V}(H_{i})\otimes \id^{\otimes m-l}(M_{f})=\Xi(x_{s_{i}}^{(l)})(M_{f}),$$
and hence
$$\Xi(T_{i}x_{\sigma}^{(l)})(M_{f})=\Xi(x_{\sigma}^{(l)})\Xi(T_{i})(M_{f})=\Xi(x_{\sigma}^{(l)})\Xi(x_{s_{i}}^{(l)})(M_{f})=\Xi(x_{s_{i}}^{(l)}x_{\sigma}^{(l)})(M_{f}),$$
$$\Xi(x_{\sigma}^{(l)}T_{i})(M_{f})=\Xi(T_{i})\Xi(x_{\sigma}^{(l)})(M_{f})=\Xi(x_{s_{i}}^{(l)})\Xi(x_{\sigma}^{(l)})(M_{f})=\Xi(x_{\sigma}^{(l)}x_{s_{i}}^{(l)})(M_{f}).$$
If $M_f\notin\uV_{\underline{l},\emptyset}^{\otimes m}$, the both sides of equations are zero, since $\Xi(x_{\sigma}^{(l)})(M_f)=0$ and $\Xi(T_i)(M_f)\notin\uV_{\underline{l},\emptyset}$.

For $i>l$, we have
$$\Xi(T_{i}x_{\sigma}^{(l)})=q^{-1}\Xi(x_{\sigma}^{(l)})=\Xi(x_{\sigma}^{(l)}T_{i}),$$
since $\Xi(T_i)=\Psi(H_i)$ acts as multiplication by $q^{-1}$ on vector of $\uV_{\underline{l},\emptyset}^{\otimes m}$ and annihilates  vectors which are outside of  $\uV_{\underline{l},\emptyset}^{\otimes m}$.
Thus, $\Xi$ keeps relation (\ref{dha def3}).

For $l\neq k$, it follows easily from definition that $\Xi(x_{\sigma}^{(l)}x_{\gamma}^{(k)})=\Xi(x_{\gamma}^{(k)}x_{\sigma}^{(l)})=0$.
Regard $\mathfrak{S}_l$ as a subgroup of $\mathfrak{S}_m$, we get similarly for $0<i<l$
$$\Xi(x_{\sigma}^{(l)}x_{s_i}^{(l)})=\Psi({H_\sigma H_i}).$$
The relation (\ref{dha def2}) holds by the composition formula in Hecke algebra.

We conclude the existence of $\Xi$, which is a representation of $\dha_m$ on $\uVm$.

\end{proof}

\section{Levi-type $q$-Schur-Weyl duality of type $\tsb$}
In this section, we establish a duality between the Levi-type $\imath$quantum
group and the duplex Hecke algebra $\dha_m$.

\subsection{$\imath$quantum group of Levi-type}\label{Levi-type}
\begin{defn}
	Denote by $L_{q}^{\imath}(\sll_{2r+4})$  the subalgebra of $\buq^{\imath}(\mathfrak{s}\mathfrak{l}_{2r+4})$
	generated by $B_{0}:= E_{0}+qF_{0}K_{0}^{-1}+K_{0}^{-1}$, all $B_{i}:= E_{i}+F_{-i}K_{i}^{-1}$ with $0\neq i\in\mathbb{I}_{2r+1}$, and all $k_{i}:= K_{i}K_{-i}^{-1}$ with $i\in\mathbb{I}_{2r+3}$.  Call $L_{q}^{\imath}(\sll_{2r+4})$ an $\imath$quantum group of Levi-type.
\end{defn}
It is easy to see $L_{q}^{\imath}(\mathfrak{s}\mathfrak{l}_{2r+4})
\cong\textbf{U}_{q}^{\imath}(\mathfrak{s}\mathfrak{l}_{2r+2})\oplus
\langle k_{r+1},k_{r+1}^{-1}\rangle$
as a space.

\subsection{} The action of $\buq^{\imath}(\mathfrak{s}\mathfrak{l}_{2r+4})$ on $\uVm$ restricts to the subalgebra $L_{q}^{\imath}(\sll_{2r+4})$ and denoted by $\Phi$ again. On the other hand, we establish the action of $\dha_m$ on $\uVm$ in Proposition \ref{prop1}. We have the following observation, which  reveals the relation between the Levi-type $\imath$quantum group $L_q^\imath(\sll_{2r+4})$ and the duplex Hecke algebra $\dha_m$.

\begin{lemma}\label{lem commutate}
	The actions of $L_{q}^{\imath}(\sll_{2r+4})$
	and $\dha_m$ on $\uVm$
	commute with each other, i.e. $\Phi(L_{q}^{\imath}(\sll_{2r+4}))\subset \End_{\dha_m}(\uVm)$.
\end{lemma}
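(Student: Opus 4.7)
The plan is to verify commutation generator-by-generator. For the Hecke generators $T_i$, since $\Xi(T_i)=\Psi(H_i)$ and $L_q^\imath(\sll_{2r+4})\subseteq\buq^\imath(\sll_{2r+4})$, the commutation is immediate from Bao-Wang's $\imath$Schur duality (Theorem \ref{iduality}). The substantive content is to prove $\Phi(u)\Xi(x_\sigma^{(l)})=\Xi(x_\sigma^{(l)})\Phi(u)$ for each generator $u$ of $L_q^\imath(\sll_{2r+4})$.

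My approach hinges on the decomposition $\uVm=\bigoplus_{I,J}\uV_{I,J}^{\otimes m}$ of \S\ref{subsection2.2}. First, I would verify that every $\Phi(u)$ preserves each summand. This is a direct inspection of the generators: the components $E_j,F_j,K_j^{\pm 1}$ appearing in $B_0$ and the $B_i$ (for $0\neq i\in\mathbb{I}_{2r+1}$) have $|j|\le r$, so they send $V$ to $V$ and either annihilate (for $E,F$) or fix (for $K$) the outer vectors $\eta_{\pm(r+3/2)}$; while each $k_i$ ($i\in\mathbb{I}_{2r+3}$) acts diagonally on the $\eta_j$-basis. The coproduct formula (\ref{action2}) then shows $\Delta^m(u)$ does not alter the type $(I,J)$ of a basis vector $M_f$. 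Combined with the fact that $\Xi(x_\sigma^{(l)})$ vanishes off $\uV_{\underline{l},\emptyset}^{\otimes m}$ and stabilizes that summand, it suffices to prove commutation on $\uV_{\underline{l},\emptyset}^{\otimes m}=V^{\otimes l}\otimes\eta_{r+3/2}^{\otimes(m-l)}$.

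For $u\in\{B_0\}\cup\{B_i:0\neq i\in\mathbb{I}_{2r+1}\}\cup\{k_i:|i|\le r\}$, expanding $\Delta^m(u)$ via (\ref{action2}) and using that $E_j,F_j$ kill $\eta_{r+3/2}$ (so summands with an $E$- or $F$-insertion at positions $>l$ vanish) and that the trailing $K_j^{\pm 1}$'s act trivially on $\eta_{r+3/2}$ (so tails beyond position $l$ collapse to the identity), I expect the clean factorization
\[
\Phi(u)\big|_{V^{\otimes l}\otimes\eta_{r+3/2}^{\otimes(m-l)}}=\Delta^l(u)\otimes\id^{\otimes(m-l)},
\]
whence commutation with $\Psi_l^V(H_\sigma)\otimes\id^{\otimes(m-l)}$ reduces to Bao-Wang's $\imath$Schur duality applied to $V^{\otimes l}$ and $\calh(\tsb_l)$. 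For the single outlier $u=k_{r+1}$, the analogous computation yields instead $\Phi(k_{r+1})|_{\ldots}=q^{-(m-l)}\Delta^l(k_{r+1})\otimes\id^{\otimes(m-l)}$, and $\Delta^l(k_{r+1})$ is the diagonal operator scaling $M_f$ by $q^{\#\{j:f(j)=\pm(r+1/2)\}}$; this exponent is invariant under transposition of adjacent entries and under sign-reversal of the first entry, so comparison with (\ref{eq: 1.2.2}) immediately yields commutation with every $\Psi_l^V(H_i)$.

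The main obstacle is the tensor factorization $\Phi(u)|_{\ldots}=\Delta^l(u)\otimes\id^{\otimes(m-l)}$: it requires careful bookkeeping of how the leading $K_j$'s and trailing $K_j^{-1}$'s in $\Delta^m(E_j)$ and $\Delta^m(F_j)$ split across the $V^{\otimes l}$ and $\eta_{r+3/2}^{\otimes(m-l)}$ blocks. The case $u=k_{r+1}$ is the unique situation where this splitting produces a nontrivial overall scalar $q^{-(m-l)}$ and one cannot route the argument through Bao-Wang's duality; the ad hoc diagonal argument is needed there.
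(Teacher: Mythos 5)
Your proposal is correct and follows essentially the same strategy as the paper's proof: reduce to the Bao--Wang $\imath$Schur duality by establishing the factorization $\Phi(u)|_{V^{\otimes l}\otimes\eta_{r+3/2}^{\otimes(m-l)}}=\Delta^{l}(u)\otimes\id^{\otimes(m-l)}$ using the expanded coproduct formula (\ref{action2}) and the vanishing/fixing relations (\ref{compu1}) for the outer vectors. Two small refinements distinguish your write-up. First, you preface the computation with the observation that every $\Phi(u)$ preserves each summand $\uV_{I,J}^{\otimes m}$, which makes the off-$\uV_{\underline{l},\emptyset}^{\otimes m}$ case immediate (both composites kill $M_f$), whereas the paper handles this via an explicit two-case analysis of where $M_f$ can fail to lie in $\uV_{\underline{l},\emptyset}^{\otimes m}$; the two arguments are equivalent in content, but yours is cleaner. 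Second, and more substantively, you correctly single out $k_{r+1}$ as the one generator of $L_q^\imath(\sll_{2r+4})$ that does not lie in $\buq^\imath(\sll_{2r+2})$ and therefore cannot be disposed of by invoking the lower-rank $\imath$Schur duality on $V^{\otimes l}$. The paper announces that all $k_i$ with $i\in\mathbb{I}_{2r+3}$ need to be checked but its proof only treats $B_i$ and $B_0$ explicitly (Remark \ref{embedded} handles $k_i$ for $i\in\mathbb{I}_{2r+1}$, but $k_{r+1}$ is left tacit). Your ad hoc diagonal argument --- that $\Delta^{l}(k_{r+1})$ scales $M_f$ by $q^{\#\{j\le l:\,|f(j)|=r+1/2\}}$, an exponent invariant under adjacent transpositions and under the sign flip of the first entry, hence compatible with (\ref{eq: 1.2.2}) --- fills this omission correctly and is a genuine, if small, improvement over what the paper writes down.
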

\begin{proof}
We only need to show the actions of $k_{i}$ with $i\in\mathbb{I}_{2r+3}$, and
$B_{i}$ with $i\in\mathbb{I}_{2r+1}$ on $\uV^{\otimes m}$ commute with the action of $T_{i}$ with $i=0,1,\ldots\ ,m-1$, and
$x_{\sigma}^{(l)}$ with $\sigma\in \fsl$ for $0\leq l\leq m$, respectively.

The mutual commutant property  of all $\Xi(T_i)$ with $\Phi(L_{q}^{\imath}(\sll_{2r+4}))$ is ensured by  $\imath$Schur duality (Theorem \ref{iduality}) and Remark \ref{rem: 2.2}.
We first check that all $\Psi(B_i)$ with $0\neq i\in\mathbb{I}_{2r+1}$ commute with all $\Xi(x_{\sigma}^{(l)})$ with $\sigma\in\fsl$. 	From (\ref{action2}), we have that
	\begin{align*}
	\Delta^{m}(B_{i}) &=\Delta^{m}(E_{i}+F_{-i}K_{i}^{-1})=\Delta^{m}(E_{i})+
	\Delta^{m}(F_{-i}K_{i}^{-1})\\
	&=\sum_{j=1}^m \id^{\otimes (j-1)}\otimes E_{i}\otimes K_{i}^{\otimes(j-m)}+(\sum_{j=1}^m {K_{-i}}^{\otimes(j-1)}\otimes F_{-i}\otimes \id^{\otimes(m-j)})
	(K_{i}^{\otimes -m})\\
	&=\sum_{j=1}^m \id^{\otimes(j-1)}\otimes E_i\otimes K_{i}^{\otimes(j-m)}+k_i^{\otimes (1-j)}\otimes F_{-i}K_i^{-1}\otimes K_{i}^{\otimes(j-m)}.
\end{align*}
Notice any element of $\uV_{\underline{l},\emptyset}$ can be written as $\alpha\otimes\eta_{r+\frac{3}{2}}^{\otimes m-l}$ with some  $\alpha\in V^{\otimes l}$. For $0\neq i\in\mathbb{I}_{2r+1}$, we can get
\begin{align*}
	&\Phi(B_{i})(\alpha\otimes\eta_{r+\frac{3}{2}}^{\otimes m-l})
	=\Delta^{m}(B_{i})(\alpha\otimes\eta_{r+\frac{3}{2}}^{\otimes m-l})\\
	=&(\sum_{j=1}^m \id^{\otimes(j-1)}\otimes E_i\otimes K_{i}^{\otimes(j-m)}+ k_i^{\otimes (1-j)}\otimes F_{-i}K_i^{-1}\otimes K_{i}^{\otimes(j-m)})(\alpha\otimes\eta_{r+\frac{3}{2}}^{\otimes m-l}).\\
	=&(\sum_{j=1}^l \id^{\otimes(j-1)}\otimes E_i\otimes K_{i}^{\otimes(j-m)} +k_i^{\otimes (1-j)}\otimes F_{-i}K_i^{-1}\otimes  K_{i}^{\otimes(j-m)})(\alpha\otimes\eta_{r+\frac{3}{2}}^{\otimes m-l})\\
	=&(\sum_{j=1}^l \id^{\otimes (j-1)}\otimes E_i+k_i^{\otimes(1-j)}\otimes F_{-i}K_i^{-1})(\alpha)\otimes  \eta_{r+\frac{3}{2}}^{\otimes m-l}\cr
	=&	\Delta^{l}(B_{i})(\alpha)\otimes\eta_{r+\frac{3}{2}}^{\otimes m-l}.
\end{align*}
The third and fourth equalities follow from the fact
\begin{equation}
\begin{aligned}\label{compu1}
	E_{i}\eta_{r+\frac{3}{2}}=F_{-i}K_{i}^{-1}\eta_{r+\frac{3}{2}}=0, \text{ and } K_{i}^{-1}\eta_{r+\frac{3}{2}}=\eta_{r+\frac{3}{2}} \text{ for } i\in\mathbb{I}_{2r+1}.	
\end{aligned}
\end{equation}
Hence, from $\imath$Schur duality on $V^{\otimes l}$, we have
\begin{align*}
	\Xi(x_{\sigma}^{(l)})\Phi(B_{i})(\alpha\otimes\eta_{r+\frac{3}{2}}^{\otimes m-l})
	=&\Xi(x_{\sigma}^{(l)})(\Delta^{l}(B_{i})(\alpha)\otimes\eta_{r+\frac{3}{2}}^{\otimes m-l})\cr
	=&\Psi_{l}^{V}(H_{\sigma})\Delta^{l}(B_{i})(\alpha)\otimes\eta_{r+\frac{3}{2}}^{\otimes m-l}\cr
	=&\Delta^{l}(B_{i})\Psi_{l}^{V}(H_{\sigma})(\alpha)\otimes\eta_{r+\frac{3}{2}}^{\otimes m-l}\cr
	=&\Phi(B_{i})\Xi(x_{\sigma}^{(l)})(\alpha\otimes\eta_{r+\frac{3}{2}}^{\otimes m-l}).
\end{align*}

Now we consider the action on the other parts outside of $\uV_{\underline{l},\emptyset}$. We already have $\Phi(B_{i})\Xi(x_{\sigma}^{(l)})=0$. For any
$M_{f}=\eta_{f(1)}\otimes\eta_{f(2)}\otimes\cdots\otimes\eta_{f(m)}\notin\underline{V}_{\underline{l},
	\emptyset}^{\otimes m}$,
there are two possibilities:
\begin{itemize}
	\item[(i)] There exists $k\in\{1,2,\ldots,l\}$ such that $f(k)\in\{-r-\frac{3}{2},r+\frac{3}{2}\}$;
	\item[(ii)] There exists  $k\in\{l+1,l+2,\ldots,m\}$ with $f(k)\neq r+\frac{3}{2}$.
\end{itemize}

In the first case, following from formula (\ref{compu1}), the $k$th factor of each term of $\Psi(B_i)(M_f)$ does not lie in $\uV_{\underline{l},\emptyset}$ for $j\neq k$ and is zero for $j=k$.
It shows $\Xi(x_{\sigma}^{(l)})\Phi(B_{i})=0$ as required.

For the second case, the action of $E_{i}$, $F_{-i}K_{i}^{-1}$,  $K_{i}^{-1}$, and $k_i^{-1}$ for $i\in\mathbb{I}_{2r+1}$ cannot change $\eta_{k}$ to $\eta_{r+\frac{3}{2}}$. We still have $\Xi(x_{\sigma}^{(l)})\Phi(B_{i})=0$.

It remains to show $\Phi(B_0)$ commutes with $\Xi(x_{\sigma}^{(l)})$. Notice that
	\begin{align*}
    &\Delta^{m}(B_{0})
    =\Delta^{m}(E_{0})+
	\Delta^{m}(qF_{0}K_{0}^{-1})+\Delta^m({K_0^{-1}})\\
	=&\sum_{j=1}^m \id^{\otimes (j-1)}\otimes E_{0}\otimes K_{i}^{\otimes(j-m)}+q(\sum_{j=1}^m {K_{-i}}^{\otimes(j-1^)}\otimes F_{-i}\otimes \id^{\otimes(m-j)})
	(K_{i}^{\otimes -m})+K_0^{\otimes -m}\\
	=&\sum_{j=1}^m \id^{\otimes(j-1)}\otimes E_i\otimes K_{i}^{\otimes(j-m)}+q k_i^{\otimes (1-j)}\otimes F_{-i}K_i^{-1}\otimes K_{i}^{\otimes(j-m)}+K_0^{\otimes -m}.
\end{align*}
Similarly, we still have
\begin{align*}
	\Xi(x_{\sigma}^{(l)})\Phi(B_{0})(\alpha\otimes\eta_{r+\frac{3}{2}}^{\otimes m-l})
	=&\Psi_{l}^{V}(H_{\sigma})\Delta^{l}(B_{0})(\alpha)\otimes\eta_{r+\frac{3}{2}}^{\otimes m-l}\\
	=&\Delta^{l}(B_{0})\Psi_{l}^{V}(H_{\sigma})(\alpha)\otimes\eta_{r+\frac{3}{2}}^{\otimes m-l}\\
	=&\Phi(B_{0})\Xi(x_{\sigma}^{(l)})(\alpha\otimes\eta_{r+\frac{3}{2}}^{\otimes m-l}),
\end{align*}
and
 $$\Xi(x_{\sigma}^{(l)})\Phi(B_{0})(M_f)=\Phi(B_{0})\Xi(x_{\sigma}^{(l)})(M_f)=0,$$
 for any $M_f$ outside of $\uV_{\underline{l},\emptyset}$,
since $K_0\eta_{r+\frac{3}{2}}=\eta_{r+\frac{3}{2}}$. The Lemma follows.

\end{proof}

\begin{remark}\label{embedded}
	Let $\Phi_l^{V}$ be the natural representation of $ \textbf{U}_{q}^{\imath}(\mathfrak{s}\mathfrak{l}_{2r+2})$ on $V^{\otimes l}.$
As in the proof of Lemma \ref{lem commutate}, we have
	$$\Phi(B_{i})(\alpha\otimes\eta_{r+\frac{3}{2}}^{\otimes m-l})
	=\Delta^{m}(B_{i})(\alpha\otimes\eta_{r+\frac{3}{2}}^{\otimes m-l})
	=\Delta^{l}(B_{i})(\alpha)\otimes\eta_{r+\frac{3}{2}}^{\otimes m-l}
	=\Phi_{l}^{V}(B_{i})(\alpha)\otimes\eta_{r+\frac{3}{2}}^{\otimes m-l},$$	
	$$\Phi(k_i)(\alpha\otimes\eta_{r+\frac{3}{2}}^{\otimes m-l})=\Delta^m(k_i)(\alpha\otimes\eta_{r+\frac{3}{2}}^{\otimes m-l})=\Delta^l(k_i)(\alpha)\otimes\eta_{r+\frac{3}{2}}^{\otimes m-l}=\Phi_{l}^{V}(k_i)(\alpha)\otimes\eta_{r+\frac{3}{2}}^{\otimes m-l},$$
for all $i\in\mathbb{I}_{2r+1}$. 	
  Hence, we can regard
$\Phi_{l}^{V}(\textbf{U}_{q}^{\imath}(\mathfrak{s}\mathfrak{l}_{2r+2}))$ as a subspace of
$\Phi(L_{q}^{\imath}(\mathfrak{s}\mathfrak{l}_{2r+2}))|_{\underline{V}_{\underline{l},\emptyset}^{\otimes m}}.$

\end{remark}

\subsection{$q$-Schur algebras of type $\tsb$ and their gradations} We recall some general facts about $q$-Schur algebras associated with Hecke algebras of type $\tsb$.
 For more details, one can consult  \cite{Gr} or \cite{LNX}.

For $M_{f}=\eta_{f(1)}
\otimes\eta_{f(2)}\otimes\cdots\otimes\eta_{f(m)} \in V$, the weight $ \lambda(M_{f}) $ of $ M_{f} $ is defined to be the $ (r+1) $-tuple
$$         (\lambda_{1},\cdots,\lambda_{r+1 }),       $$
where $ \lambda_{i} $ is the total number of occurrences of the indices $ i- \frac{1}{2}$ and $  -(i- \frac{1}{2}) $ in $ M_{f}$. The set of all weights will be denoted by $\Lambda_{\tsb}(2r+2,m)$.

The space $ V_{\lambda} $ is the span of all the basis vectors $ M_{f} $ with weight $ \lambda$, the vector $ \mathfrak{M}_{\lambda} $ is the unique basis vector $ M_{f} \in V_{\lambda} $ whose indices satisfy $ f(1)\le f(2)\le \cdots \le f(m) $.

The parabolic subgroup $ P_{\lambda} $ of  $\scrW(\tsa_{m-1})=\left\langle s_{1},\cdots,s_{m-1} \right\rangle$ is generated by some $ s_{i} \in \scrW(\tsa_{m-1})  $, which satisfy $ s_{i} \in P_{\lambda}$ if and only if the $i$th and $(i+1)$th entries of $  \mathfrak{M}_{\lambda} $ are equal.

The elements $ x_{\lambda} $ of $ \calh(\tsb_{m}) $   is defined by
$$         x_{\lambda}=\sum_{w\in P_\lambda} H_{w}.    $$

Following  \cite{Gr}, we give the following definition.
\begin{definition}
	The $q$-Schur algebra of type $ \tsb_{m}  $ is given by
	$$ S_{q} ^{\tsb}(2r+2,m)=\End_{\calh(\tsb_{m})}(\bigoplus_{\lambda\in \Lambda_\tsb(2r+2,m)}x_{\lambda}\calh(\tsb_{m})). $$
\end{definition}

\begin{proposition}\label{prop: Schur and gr} Keep the notations as above. The following statements hold.
	\begin{itemize}
		\item[(1)] $ V_{\lambda}\cong x_{\lambda}\calh({\tsb_{m}})$;
		\item[(2)] $ V^{\otimes m}\cong \bigoplus_{\lambda\in \Lambda_{\tsb}(2r+2,m)}x_{\lambda}\calh(\tsb_{m})$;
		\item[(3)] $S_{q} ^{\tsb}(2r+2,m)$ is a free  $\mathbb{Q}(q)$-module. Moreover,  there is a $ \bbq(q) $-basis in $S_q^\tsb(2r+2,m)$:
		$\{ \phi^{d}_{\lambda,\mu} \mid \lambda,\mu\in \Lambda_{\tsb}(2r+2,m)\}$ with $d$ is a distinguished double $P_{\lambda}-P_{\mu} $ coset representative, such that  each  $\phi^{d}_{\lambda,\mu} $ only has nonzero image on $ V_{\mu}$ ;
		\item[(4)] If $ \mu=(\mu_{1},\mu_{2},\cdots,\mu_{r+1}) $, then $ \phi^{d}_{\lambda,\mu} $  only has nonzero image on $ V^{\otimes m}_{k}  $ where $ k=m-\mu_{r+1} $.
	\end{itemize}
	
\end{proposition}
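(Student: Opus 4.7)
\medskip
\noindent
\textbf{Proof proposal.} My plan is to adapt the standard Dipper--James--Murphy construction of the $q$-Schur algebra (as in \cite{Gr, LNX}) to the type $\tsb$ setting, using the explicit action formulas (\ref{eq: 1.2.2}) as the main computational tool.

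For (1), I would begin by fixing the distinguished vector $\mathfrak{M}_\lambda \in V_\lambda$ and verifying via (\ref{eq: 1.2.2}) that $\mathfrak{M}_\lambda \cdot \Psi_m^V(H_{s_i}) = q^{-1}\mathfrak{M}_\lambda$ precisely when $s_i \in P_\lambda$, since the defining condition for $s_i \in P_\lambda$ is exactly $f_\lambda(i)=f_\lambda(i+1)$. Induction on length then gives $\mathfrak{M}_\lambda \cdot \Psi_m^V(H_w) = q^{-\ell(w)}\mathfrak{M}_\lambda$ for all $w \in P_\lambda$, so the right $\calh(\tsb_m)$-module map $\calh(\tsb_m) \to V_\lambda$ sending $1 \mapsto \mathfrak{M}_\lambda$ descends, up to a nonzero normalising scalar, to a map $x_\lambda \calh(\tsb_m) \to V_\lambda$. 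Surjectivity follows because every basis vector of $V_\lambda$ arises as $\mathfrak{M}_\lambda \cdot H_d$ for some minimal-length right coset representative $d$ of $P_\lambda$ in $\scrW(\tsb_m)$; injectivity is then forced by a dimension count, both sides having rank equal to $|P_\lambda \backslash \scrW(\tsb_m)|$. Statement (2) then follows by assembling the obvious weight decomposition $V^{\otimes m} = \bigoplus_{\lambda \in \Lambda_\tsb(2r+2,m)} V_\lambda$ with (1).

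For (3), I would invoke the Frobenius-reciprocity style identification
\[
\Hom_{\calh(\tsb_m)}\!\bigl(x_\mu \calh(\tsb_m),\, x_\lambda \calh(\tsb_m)\bigr) \;\cong\; \bigl\{\, h \in x_\lambda \calh(\tsb_m) \mid h H_w = q^{-\ell(w)} h \text{ for all } w \in P_\mu \,\bigr\},
\]
and parameterize a basis of the right-hand side by shortest $(P_\lambda, P_\mu)$-double coset representatives $d \in \scrW(\tsb_m)$, producing $\phi^d_{\lambda,\mu}$ as the map sending $x_\mu$ to a normalized $\calh(\tsb_m)$-sum over the coset $P_\lambda d P_\mu$. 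Under the identification of (2), each $\phi^d_{\lambda,\mu}$ automatically vanishes on every summand $x_\nu \calh(\tsb_m)$ for $\nu \ne \mu$, hence has nonzero image only on $V_\mu$. Freeness of $S_q^\tsb(2r+2,m)$ over $\bbq(q)$ is then forced because the double coset set is finite and the $\phi^d_{\lambda,\mu}$ are $\bbq(q)$-linearly independent by their distinct supports.

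For (4), observe that $\mu_{r+1}$ records the number of entries equal to $r+\tfrac{1}{2}$ or $-(r+\tfrac{1}{2})$ in any basis vector of $V_\mu$, so every such vector has exactly $k = m - \mu_{r+1}$ entries of absolute value strictly less than $r+\tfrac{1}{2}$. The Hecke operators composing $\phi^d_{\lambda,\mu}$ only permute tensor positions (the generators $H_i$ with $i\ge 1$) or negate individual entries of the first position (the generator $H_0$), so they preserve the multiset of absolute values; hence the image of $\phi^d_{\lambda,\mu}$ remains inside the subspace $V^{\otimes m}_k$ cut out by this statistic.

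The delicate point I anticipate is (3): establishing that the shortest $(P_\lambda, P_\mu)$-double coset representatives really give a basis requires a length-additivity result $\ell(udv) = \ell(u) + \ell(d) + \ell(v)$ for factorizations $w = udv$ inside the type $\tsb$ Coxeter system, whereas $P_\lambda$ and $P_\mu$ are parabolics of the type $\tsa_{m-1}$ subgroup. This asymmetry between the ambient group $\scrW(\tsb_m)$ and its type $\tsa$ parabolics prevents a verbatim appeal to the type $\tsa$ Schur algebra theory and requires a separate combinatorial argument on reduced expressions for type $\tsb$.
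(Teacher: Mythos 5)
For parts (1)--(3) the paper does not re-derive anything: it simply cites Green's construction of the hyperoctahedral $q$-Schur algebra (\cite[\S 4]{Gr}), which contains exactly the isomorphisms and double-coset basis you sketch. Your outline is in the spirit of that construction, but your closing worry about length-additivity is a misconception: $P_\lambda$ and $P_\mu$ are \emph{standard} parabolic subgroups of $\scrW(\tsb_m)$ (they are generated by a subset of the Coxeter generators $s_1,\dots,s_{m-1}$), and the existence of distinguished double-coset representatives together with length additivity $\ell(udv)=\ell(u)+\ell(d)+\ell(v)$ is a general fact about standard parabolics in an arbitrary Coxeter group (see e.g.\ \cite{Geck}), not something that requires a separate type-$\tsb$ combinatorial argument. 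That the parabolics happen to live in the type-$\tsa$ subgroup does not invalidate the general theory.

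For part (4) your argument has a genuine gap, even though you state the correct key observation. You correctly note that every basis vector of $V_\mu$ has exactly $k=m-\mu_{r+1}$ entries of small absolute value, i.e.\ $V_\mu\subset V^{\otimes m}_k$; together with (3) this is \emph{all} that is needed: since $\phi^d_{\lambda,\mu}$ vanishes on every $V_\nu$ with $\nu\neq\mu$, and $V^{\otimes m}_j$ for $j\neq k$ is a direct sum of weight spaces $V_\nu$ with $\nu\neq\mu$, the map $\phi^d_{\lambda,\mu}$ vanishes on $V^{\otimes m}_j$ for $j\neq k$. This is exactly the paper's proof. Instead, you go on to claim that ``the image of $\phi^d_{\lambda,\mu}$ remains inside $V^{\otimes m}_k$'' because Hecke operators preserve absolute values. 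This is a statement about the \emph{range}, not the domain of nonvanishing, and it is not what statement (4) asserts nor what the application in Theorem \ref{thm: main thm1} requires. Moreover it is not justified: the range of $\phi^d_{\lambda,\mu}$ lies in $V_\lambda\subset V^{\otimes m}_{m-\lambda_{r+1}}$, and there is no reason for $\lambda_{r+1}$ to equal $\mu_{r+1}$ in general (indeed, a generator such as $B_{r+1}$ of the $\imath$quantum group, which under the duality produces Hecke intertwiners, explicitly changes the last weight coordinate). You should replace the last sentence of your proof of (4) with the direct deduction from (3) and the inclusion $V_\mu\subset V^{\otimes m}_k$.
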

\begin{proof} Parts (1)-(3) follow from  \cite[\S4]{Gr}. As to  (4), we remark that  $\phi^{d}_{\lambda,\mu} $ only has nonzero image on $ V_{\mu}$ and $V_{\mu}\subset V^{\otimes m}_{k} $, where $ k=m-\mu_{r+1}$ (with the notation from \S\ref{subsection2.2}). Then (4) follows immediately.
\end{proof}

\subsection{} Now we can establish one part of the property of double centralizers.
\begin{theorem}\label{thm: main thm1}
	If $q$ is transcendental
	over $\mathbb{Q}$. We have $\Phi(L_{q}^{\imath}(\mathfrak{s}\mathfrak{l}_{2r+4}))
	=\End_{\dha_m}(\underline{V}^{\otimes m}).$
\end{theorem}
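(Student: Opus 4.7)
The plan is to combine Lemma \ref{lem commutate} with $\imath$Schur duality and a dimension count anchored on the smaller-rank $q$-Schur algebras $S_q^\tsb(2r+2, l)$. The inclusion $\Phi(L_q^\imath(\sll_{2r+4})) \subseteq \End_{\dha_m}(\uV^{\otimes m})$ is Lemma \ref{lem commutate}, so the task is the reverse inclusion.

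First I would note that since $\calh(\tsb_m) \hookrightarrow \dha_m$ by Remark \ref{rem: 2.2}, any $\phi \in \End_{\dha_m}(\uV^{\otimes m})$ sits inside $\End_{\calh(\tsb_m)}(\uV^{\otimes m}) = \Phi(\textbf{U}_q^\imath(\sll_{2r+4}))$ via Theorem \ref{iduality}. Next I would extract the extra structure encoded by the $x_\sigma^{(l)}$. Commutation with $\Xi(x_{\id}^{(l)})$, which is the projector onto $\uV_{\underline{l},\emptyset}^{\otimes m}$, forces $\phi$ to preserve each $\uV_{\underline{l},\emptyset}^{\otimes m}$; combining this with $\calh(\tsb_m)$-equivariance and Lemma \ref{lem: 3.2} gives $\phi(\uV_l^{\otimes m}) \subseteq \uV_l^{\otimes m}$ for every $l$. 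On $\uV_{\underline{l},\emptyset}^{\otimes m} \cong V^{\otimes l}$, commutation with $\Xi(x_\sigma^{(l)}) = \Psi_l^V(H_\sigma) \otimes \id^{\otimes(m-l)}$ forces $\phi|_{V^{\otimes l}} \in \End_{\calh(\tsb_l)}(V^{\otimes l}) = \Phi_l^V(\textbf{U}_q^\imath(\sll_{2r+2})) = S_q^\tsb(2r+2, l)$, by a second application of Theorem \ref{iduality} at rank $2r+2$. Packaging this yields
\[
R: \End_{\dha_m}(\uV^{\otimes m}) \longrightarrow \bigoplus_{l=0}^m S_q^\tsb(2r+2, l), \qquad \phi \longmapsto \bigl(\phi|_{V^{\otimes l}}\bigr)_l,
\]
which is injective because Lemma \ref{lem: 3.2} recovers $\phi$ from its restrictions: every vector of $\uV_l^{\otimes m}$ is the image of some vector in $\uV_{\underline{l},\emptyset}^{\otimes m}$ under an invertible $\Xi(\omega_{I,J})^{\pm 1}$. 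Hence $\dim \End_{\dha_m}(\uV^{\otimes m}) \leq \sum_l \dim S_q^\tsb(2r+2, l)$.

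The crux of the argument — and the step I expect to take the most care — is to prove that the restriction of $R \circ \Phi$ to $L_q^\imath(\sll_{2r+4})$ surjects onto $\bigoplus_l S_q^\tsb(2r+2, l)$. By Remark \ref{embedded}, the subalgebra $\textbf{U}_q^\imath(\sll_{2r+2}) \subseteq L_q^\imath(\sll_{2r+4})$ maps diagonally via $u \mapsto (\Phi_l^V(u))_l$, and Theorem \ref{iduality} shows each component surjects onto $S_q^\tsb(2r+2, l)$; so projection to every factor is surjective, but the image is a priori only a \emph{subdirect} product. The extra generator $k_{r+1}^{\pm 1}$ acts on $\uV_{\underline{l},\emptyset}^{\otimes m}$ by $k_{r+1}^{\otimes l} \otimes q^{-(m-l)}\id^{\otimes(m-l)}$, in which the scalar factor $q^{-(m-l)}$ depends only on $l$. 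Since $q$ is transcendental over $\bbq$, the scalars $q^{-m}, q^{-m+1}, \ldots, q^{0}$ are pairwise distinct, and a Lagrange/Vandermonde interpolation in $k_{r+1}^{\pm 1}$, combined with multiplication by arbitrary elements of the diagonal copy of $\textbf{U}_q^\imath(\sll_{2r+2})$, should separate the $l$-components and independently realize each $S_q^\tsb(2r+2, l)$-factor. This is precisely where the transcendence hypothesis enters via \cite{DJ}, paralleling the type-$\tsa$ treatment in \cite{XZ}. With surjectivity established, the chain
\[
\dim \Phi(L_q^\imath(\sll_{2r+4})) \leq \dim \End_{\dha_m}(\uV^{\otimes m}) \leq \sum_{l=0}^m \dim S_q^\tsb(2r+2, l) \leq \dim \Phi(L_q^\imath(\sll_{2r+4}))
\]
forces all inequalities to be equalities, and combined with Lemma \ref{lem commutate} this yields $\Phi(L_q^\imath(\sll_{2r+4})) = \End_{\dha_m}(\uV^{\otimes m})$.
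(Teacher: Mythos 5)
Your overall strategy tracks the paper's quite closely: decompose an arbitrary $\phi\in\End_{\dha_m}(\uVm)$ into $l$-graded pieces using commutation with the projectors $\Xi(x_e^{(l)})$, identify each piece with an element of $S_q^\tsb(2r+2,l)\cong\Phi_l^V(\mathbf{U}_q^\imath(\sll_{2r+2}))$ via $\imath$Schur duality at rank $2r+2$, and then use an interpolation in the Cartan part of $L_q^\imath(\sll_{2r+4})$ to separate the $l$-components. The packaging via an injective map $R$ into $\bigoplus_l S_q^\tsb(2r+2,l)$ followed by a dimension sandwich is a minor repackaging of what the paper does more directly, and the first half of your argument is sound.

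However, there is a genuine gap in the interpolation step, which you yourself flag as the crux. You propose interpolating in $k_{r+1}^{\pm1}$, asserting that $k_{r+1}$ acts on $\uV_{\underline{l},\emptyset}^{\otimes m}$ as $k_{r+1}^{\otimes l}\otimes q^{-(m-l)}\,\id^{\otimes(m-l)}$. That formula is correct, but the first factor $k_{r+1}^{\otimes l}$ is \emph{not} a scalar on $V^{\otimes l}$: from (\ref{action1.2}), $k_{r+1}$ has eigenvalue $q$ on $\eta_{\pm(r+\frac12)}$ and eigenvalue $1$ on every other $\eta_j$ with $j\in\mathbb{I}_{2r+2}$. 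Consequently $\Phi(k_{r+1})$ on $\uV_{\underline{l},\emptyset}^{\otimes m}$ has eigenvalues $q^{a-(m-l)}$ with $a$ ranging over $\{0,1,\dots,l\}$, and these eigenvalue sets for different $l$ overlap (e.g.\ with $m=3$, level $l=2$ contributes $q^{-1}$ and level $l=1$ also contributes $q^{-1}$). No polynomial in $k_{r+1}$ can therefore act as $\id$ on $\uV_l^{\otimes m}$ and as $0$ on $\uV_k^{\otimes m}$ for $k\neq l$, so the Lagrange/Vandermonde interpolation you invoke does not produce the level-$l$ projectors. This cannot be rescued simply by multiplying by elements of the diagonal copy of $\mathbf{U}_q^\imath(\sll_{2r+2})$, since those act within each level rather than separating levels. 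The paper sidesteps exactly this pitfall by taking $X=k_{r+1}^{r+1}k_r^r\cdots k_2^2 k_1$, a specific product in the Cartan generators of $L_q^\imath(\sll_{2r+4})$, which acts by the \emph{single} scalar $q$ on every $\eta_i$ with $i\in\mathbb{I}_{2r+2}$ and by $q^{-(r+1)}$ on $\eta_{\pm(r+\frac32)}$, so that $\Phi(X)$ is a scalar $F(l)=q^{l(r+2)-m(r+1)}$ on all of $\uV_l^{\otimes m}$. With $q$ transcendental these $F(l)$ are pairwise distinct, and the Lagrange interpolation goes through. You need to replace $k_{r+1}$ with $X$ (or a similar weighted product) for your argument to close.
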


\begin{proof}
	By Lemma \ref{lem commutate}, it is enough to  show $\End_{\dha_m}(\underline{V}^{\otimes m})\subset\Phi(L_{q}^{\imath}(\mathfrak{s}\mathfrak{l}_{2r+4})).$
	
	Note that $\End_{\dha_m}(\underline{V}^{\otimes m})\subset \End_{\calh(\tsb_{m})}(\underline{V}^{\otimes m})=S_q^\tsb(2r+4,m)$.
    Thanks to Proposition \ref{prop: Schur and gr}, for every $\phi\in \End_{\dha_m}(\underline{V}^{\otimes m})$,
	we have $\phi=\underset{\lambda,\mu \in \Lambda_{\tsb}(2r+4,m)}{\sum}a_{\lambda\mu d}\phi^{d}_{\lambda,\mu}$, where $a_{\lambda\mu d}\in\mathbb{Q}(q)$. More precisely,	
	$$\phi= \sum_{l=0}^{m}\phi_{l}=\displaystyle\sum_{l=0}^m\sum_{\lambda,\mu \in \Lambda_{\tsb}(2r+4,m),\mu_{r+2}=m-l}a_{\lambda\mu d}\phi^{d}_{\lambda,\mu}.$$
	  We have $ \phi|_{\underline{V}_l^{\otimes m}}=\phi_{l} $ by Proposition \ref{prop: Schur and gr}(4). Now we will proceed the arguments by several steps.
	
	(a) We show that $\phi_{l}$ stabilizes $\underline{V}_{I,J}^{\otimes m}$ with $\#I=l$.
	
	We consider $\phi_{l}$ stabilizes $\underline{V}_{\underline{l},\emptyset}^{\otimes m}$ at first.
	Suppose $\nu_{j}\in\underline{V}_{\underline{l},\emptyset}^{\otimes m}$, and
	$\phi_{l}(\nu_{j})=\nu_{l}+\nu_{k}$ with $\nu_{l}\in\underline{V}_{\underline{l},\emptyset}^{\otimes m}$,
	$\nu_{k}\notin\underline{V}_{\underline{l},\emptyset}^{\otimes m}$.
	Denote the identity element of $\frak{S}_{l}$ by $e$, then	
	$$	\Xi(x_{e}^{(l)})(\nu_{l}+\nu_{k})=\Xi(x_{e}^{(l)})\phi_{l}(\nu_{j})=\Xi(x_{e}^{(l)})\phi(\nu_{j})=\phi\Xi(x_{e}^{(l)})(\nu_{j})=\nu_{l}+\nu_{k}.	$$	
	By the definition of $\Xi$, we have $\Xi(x_{e}^{(l)})(\nu_{l})=\nu_{l}$, and
	$\Xi(x_{e}^{(l)})(\nu_{k})=0$.
	It yields that $\nu_{k}=0$, and hence $\phi_{l}$ stabilizes
	$\underline{V}_{\underline{l},\emptyset}^{\otimes m}.$
	
    Then we consider a general summand $\underline{V}_{I,J}^{\otimes m}$
	by using Lemma \ref{lem: 3.2}.  Note that
	$\phi\Xi(\omega_{I,J}^{-1})=\Xi(\omega_{I,J}^{-1})\phi$, and
	$\Xi(\omega_{I,J}^{-1})\phi(\underline{V}_{\underline{l},
		\emptyset}^{\otimes m})=\Xi(\omega_{I,J}^{-1})\phi_{l}(\underline{V}_{\underline{l},\emptyset}^{\otimes m})\subset\underline{V}_{I,J}^{\otimes m}$.
	It yields that
	$$
	\phi_{l}(\underline{V}_{I,J}^{\otimes m})=\phi_{l}\Xi(\omega_{I,J}^{-1})(\underline{V}_{\underline{l},\emptyset}^{\otimes m})=\phi\Xi(\omega_{I,J}^{-1})(\underline{V}_{\underline{l},\emptyset}^{\otimes m})=\Xi(\omega_{I,J}^{-1})\phi(\underline{V}_{\underline{l},\emptyset}^{\otimes m})\subset\underline{V}_{I,J}^{\otimes m},$$	
	as desired.	
	
	(b) We claim that
	there exists an element $g_{l}\in L_{q}^{\imath}(\mathfrak{s}\mathfrak{l}_{2r+4})$ satisfying
	$$ \phi_{l}|_{\underline{V}_{l}^{\otimes m}}=\Phi(g_{l})|_{\underline{V}_{l}^{\otimes m}}.$$

	We already have that
	\begin{align*}
		\Xi(T_{0})|_{\underline{V}_{\underline{l},\emptyset}^{\otimes m}}&=\Psi_{l}^{V}(H_{0})\otimes \id^{\otimes m-l},\\
		\Xi(x_{\sigma}^{(l)})|_{\underline{V}_{\underline{l},\emptyset}^{\otimes m}}&=\Psi_{l}^{V}(H_{\sigma})\otimes \id^{\otimes m-l}\text{ for any }
		\sigma\in \fsl.
	\end{align*}

	On the other hand, consider the $\imath$Schur duality of $\Phi_l^V$ and $\Psi_l^V$ on $\uV_{\underline{l},\emptyset}^{\otimes m}$ (extended from $V^{\otimes l}$).
	For any $\phi\in \End_{\dha_m}(\underline{V}^{\otimes m})$, we have
    $$\phi_{l}|_{\underline{V}_{\underline{l},\emptyset}^{\otimes m}}\in \End_{\calh(\tsb_{l})}(\underline{V}_{\underline{l},\emptyset}^{\otimes m})
	=
	\Phi_l^V(\textbf{U}_{q}^{\imath}(\mathfrak{s}\mathfrak{l}_{2r+2}))
	\subset\Phi(L_{q}^{\imath}(\mathfrak{s}\mathfrak{l}_{2r+4}))|_{\uV_{\underline{l},\emptyset}}.$$
	(See Remark \ref{embedded}.)
	 So there exists $g_{l}\in L_{q}^{\imath}(\mathfrak{s}\mathfrak{l}_{2r+4})$ such that
	$\phi_{l}|_{\underline{V}_{\underline{l},\emptyset}^{\otimes m}}=\Phi(g_{l})|_{\underline{V}_{\underline{l},\emptyset}^{\otimes m}}$. Consequently,  $$\phi_{l}(\underline{V}_{\underline{l},\emptyset}^{\otimes m})=\phi_{l}\Xi(\omega_{I,J})(\underline{V}_{I,J}^{\otimes m})=\Phi(g_{l})\Xi(\omega_{I,J})(\underline{V}_{I,J}^{\otimes m}).$$
	Note that $\Phi(L_{q}^{\imath}(\mathfrak{s}\mathfrak{l}_{2r+4}))\subset \End_{\dha_m}(\underline{V}^{\otimes m})$. Hence $\Phi(g_{l})$ commutes with $\Xi(\omega_{I,J})$. Thus, we have	
	\begin{align*}
		\phi_{l}(\underline{V}_{I,J}^{\otimes m})
		&=\Xi(\omega_{I,J}^{-1})\phi_{l}\Xi(\omega_{I,J})(\underline{V}_{I,J}^{\otimes m})\cr
		&=\Xi(\omega_{I,J}^{-1})\Phi(g_{l})\Xi(\omega_{I,J})(\underline{V}_{I,J}^{\otimes m})\cr
		&=\Phi(g_{l})(\underline{V}_{I,J}^{\otimes m}).
	\end{align*}
	Since every direct summands of $\uV_l^{\otimes m}$ are stabilized by $\phi_l$,
	we have $\phi_{l}|_{\underline{V}_{l}^{\otimes m}}=\Phi(g_{l})|_{\underline{V}_{l}^{\otimes m}}$.
	
	(c) We construct an element of $L_{q}^{\imath}(\mathfrak{s}\mathfrak{l}_{2r+4})$ such that its image under $\Phi$ coincides with $\phi$,  which consequently complete the proof.
	
	For this purpose, we firstly set
	$X:= k_{r+1}^{r+1}k_{r}^{r}\cdots k_{2}^{2}k_{1}\in L_q^\imath(\sll_{2r+4})$.	
	From formula (\ref{action1.2}) we have
	\begin{align*}
		&\Phi(X)(\eta_{r+\frac{3}{2}})=q^{-(r+1)}\eta_{r+\frac{3}{2}},\\
		&\Phi(X)(\eta_{-r-\frac{3}{2}})=q^{-(r+1)}\eta_{-r-\frac{3}{2}},\\
	    &\Phi(X)(\eta_{i})=q\eta_{i}, \text{ for } i\in\mathbb{I}_{2r+2}.
	\end{align*}	
	Hence
	$$
	\Phi(X)|_{\underline{V}_{l}^{\otimes m}}=q^{l-(m-l)(r+1)}\cdot \id|_{\underline{V}_{l}^{\otimes m}}=q^{l(r+2)-m(r+1)}\cdot \id|_{\underline{V}_{l}^{\otimes m}}.
	$$	
	We set $F(l):= q^{l(r+2)-m(r+1)}$ (the integers $m$ and $r$ are fixed). Since $q$ is transcendental
	over \ensuremath{\mathbb{Q}}, we have $F(l)\neq F(k)$, for $l\neq k$.
	
	With the above $X$, we secondly consider the element $$G_{l}:=\frac{(X-F(1))\cdots\widehat{(X-F(l))}\cdots(X-F(m))}{(F(l)-F(1))\cdots\widehat{(F(l)-F(l))}\cdots(F(l)-F(m))},$$
	where the hat indicates as usual that the term is dropped.
	Then we have
	$$
	\Phi(G_{l})|_{\underline{V}_{l}^{\otimes m}}=(\frac{(F(l)-F(1))\cdots\widehat{(F(l)-F(l))}\cdots(F(l)-F(m))}{(F(l)-F(1))\cdots\widehat{(F(l)-F(l))}\cdots(F(l)-F(m))})\cdot \id|_{\underline{V}_{l}^{\otimes m}}=\id|_{\underline{V}_{l}^{\otimes m}},
	$$
	and
	$$
	\Phi(G_{l})|_{\underline{V}_{k}^{\otimes m}}=(\frac{(F(k)-F(1))\cdots\widehat{(F(k)-F(l))}\cdots(F(k)-F(m))}{(F(k)-F(1))\cdots\widehat{(F(k)-F(l))}\cdots(F(k)-F(m))})\cdot \id|_{\underline{V}_{l}^{\otimes m}}=0
	$$
	for $k\neq l$.
	Thus
	\begin{align*}
		\Phi(g_{l})\Phi(G_{l})\cdot{V}_{l}^{\otimes m}&=\phi_{l}\cdot\uV_{l}^{\otimes m}; \cr
		\Phi(g_{l})\Phi(G_{l})\cdot\uV_{k}^{\otimes m}&=\phi_{l}\cdot\uV_{k}^{\otimes m}=0  \; \text{ for } l\neq k.
	\end{align*}
	Hence $\phi_{l}=\Phi(g_{l}\cdot G_{l})$. So we finally succeed in  finding a desired element
	$$ \sum_{l=0}^m g_l\cdot G_l\in L_{q}^{\imath}(\sll_{2r+4}),$$
	which satisfies
	$$\phi=\sum_{l=0}^m\phi_{l}=\Phi(\sum_{l=0}^m g_{l}\cdot G_l)\in\Phi(L_{q}^{\imath}(\sll_{2r+4})).$$
	The proof is completed.
\end{proof}

\subsection{} Now we begin to deal with the other part of the double centraliser property. Under our assumption of $q$ being transcendental over \ensuremath{\mathbb{Q}}, we have that  $V^{\otimes l}$ is a completely
reducible $\calh(\tsb_{l})$-module. Suppose $V^{\otimes l}=\underset{\pi\in I_{l}}{\bigoplus}S^\pi$,
where $S^\pi$ is irreducible $\calh(\tsb_{l})$-module and $I_{l}$
is the index set. (We are not going to introduce this index set for simplicity, see details in \cite[\S 5.5]{DJ}.)
Combining with notations in \S\ref{subsection2.2}, we have
$$\underline{V}_{\underline{l},\emptyset}^{\otimes m}
=V^{\otimes l}\otimes\eta_{r+\frac{3}{2}}^{\otimes m-l}
=\underset{\pi\in I_{l}}{\bigoplus}S^\pi\otimes\eta_{r+\frac{3}{2}}^{\otimes m-l},$$
and hence
$$\underline{V}_{I,J}^{\otimes m}=\Xi(\omega_{I,J}^{-1})(\underset{\pi\in I_{l}}{\bigoplus}S^\pi\otimes\eta_{r+\frac{3}{2}}^{\otimes m-l}).$$
So we get
\begin{align*}
	\underline{V}_{l}^{\otimes m}&=\underset{\#I=l,J\subset\underline{m}\backslash I}{\bigoplus}\underline{V}_{I,J}^{\otimes m}\cr
	&=\underset{\#I=l,J\subset\underline{m}\backslash I}{\bigoplus}\Xi(\omega_{I,J}^{-1})(\underset{\pi\in I_{l}}{\bigoplus}S^\pi\otimes\eta_{r+\frac{3}{2}}^{\otimes m-l})\cr
	&=\underset{\pi\in I_{l}}{\bigoplus}\underset{\#I=l,J\subset\underline{m}\backslash I}{\bigoplus}\Xi(\omega_{I,J}^{-1})(S^\pi\otimes\eta_{r+\frac{3}{2}}^{\otimes m-l}).
\end{align*}

Under the isomorphism $\uV_{I,J}^{\otimes m}\simeq V^{\otimes l}$, all $\Xi(\omega_{I,J}^{-1})(S^\pi\otimes\eta_{r+\frac{3}{2}}^{\otimes m-l})$ are isomorphic to $S^\pi$ as an $\mathcal{H}(B_l)$-module and we denote it as $S^\pi_{I,J}$.
We set $D^{\pi}_{l}:=\underset{\#I=l,J\subset\underline{m}\backslash I}{\bigoplus}S^\pi_{I,J}$.
The following lemma shows the property of $D^\pi_{l}$.

\begin{lemma}\label{lem: 3.3} Suppose that q is transcendental over
	$\mathbb{Q}$. Then $D^{\pi}_{l}$
	is an irreducible $\dha_m$-module.
\end{lemma}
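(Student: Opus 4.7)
Let $N$ be a nonzero $\dha_m$-submodule of $D^\pi_l$; the aim is to prove $N = D^\pi_l$. The plan is threefold: (i) show $N \cap S^\pi_{\underline{l},\emptyset} \neq 0$; (ii) conclude $N \supseteq S^\pi_{\underline{l},\emptyset}$ using the $\calh(\tsb_l)$-irreducibility of $S^\pi$; (iii) transport $S^\pi_{\underline{l},\emptyset}$ to every $S^\pi_{I,J}$ via the invertible elements $\omega_{I,J}^{-1} \in \calh(\tsb_m) \subset \dha_m$ from Lemma~\ref{lem: 3.2}.

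Stages (ii) and (iii) are routine given (i). For (ii), $N \cap S^\pi_{\underline{l},\emptyset}$ is a nonzero subspace of $S^\pi_{\underline{l},\emptyset} \cong S^\pi$, stable under the action of $T_0,\ldots,T_{l-1} \in \dha_m$, which on $\uV_{\underline{l},\emptyset}^{\otimes m}$ realise the natural $\calh(\tsb_l)$-action on the first tensor factor; since $q$ is transcendental, $S^\pi$ is irreducible by \cite{DJ}, forcing $N \cap S^\pi_{\underline{l},\emptyset} = S^\pi_{\underline{l},\emptyset}$. For (iii), since by the defining construction of $S^\pi_{I,J}$ one has $\Xi(\omega_{I,J}^{-1})(S^\pi_{\underline{l},\emptyset}) = S^\pi_{I,J}$, and $\omega_{I,J}^{-1}$ is an invertible element of $\calh(\tsb_m) \subset \dha_m$, we obtain $N \supseteq S^\pi_{I,J}$ for every $(I,J)$, hence $N = D^\pi_l$.

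Stage (i) is the crux, and the main obstacle. I exploit that $\Xi(x_e^{(l)})$ acts on $D^\pi_l$ as the projection onto $S^\pi_{\underline{l},\emptyset}$ with kernel $\bigoplus_{(I,J)\neq(\underline{l},\emptyset)} S^\pi_{I,J}$, so $\Xi(x_e^{(l)})(N) = N \cap S^\pi_{\underline{l},\emptyset}$ inside $D^\pi_l$; it therefore suffices to exhibit some $w \in N$ with nonzero $\uV_{\underline{l},\emptyset}^{\otimes m}$-component. Given $v \in N \setminus \{0\}$ decomposed as $v = \sum_{(I,J)} v_{I,J}$ with $v_{I_0,J_0} \neq 0$, the vector $w := \Xi(\omega_{I_0,J_0})(v) \in N$ has $\Xi(\omega_{I_0,J_0})(v_{I_0,J_0})$ as a nonzero summand in $\uV_{\underline{l},\emptyset}^{\otimes m}$. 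The main obstacle is to rule out cancellation of this leading term by the other contributions $\Xi(\omega_{I_0,J_0})(v_{I,J})$ with $(I,J) \neq (I_0,J_0)$; I would handle this by choosing $(I_0,J_0)$ extremal (for instance with $|J_0|$ maximal, ties broken by a linear order on pairs $(I,J)$) in accordance with the inductive construction of $\omega_{I,J}$ in Lemma~\ref{lem: 3.2}, and then invoking the explicit Hecke action formulas (\ref{eq: 1.2.2}) to verify that for such an extremal choice the leakage of the other $v_{I,J}$'s into $\uV_{\underline{l},\emptyset}^{\otimes m}$ is controlled. The transcendence of $q$ intervenes precisely to preclude accidental vanishing of the rational coefficients produced along the way, in line with the role this hypothesis plays elsewhere in the paper.
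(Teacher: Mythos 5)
Your strategy coincides with the paper's: pull a nonzero vector of $N$ into $S^\pi_{\underline{l},\emptyset}$ via $\Xi(x_e^{(l)})\circ\Xi(\omega_{I_0,J_0})$, use that $S^\pi$ is irreducible over $\calh(\tsb_l)$ (this is the place the transcendence of $q$ is invoked, through \cite{DJ}), and then propagate to every $S^\pi_{I,J}$ by the invertible $\omega_{I,J}^{-1}\in\calh(\tsb_m)\subset\dha_m$. You have also correctly flagged a genuine subtlety that the paper's written proof passes over in one line: the assertion $\Xi(x_e^{(l)})\Xi(\omega_{I,J})(\nu)=\nu_{I,J}\otimes\eta_{r+\frac{3}{2}}^{\otimes m-l}$ is not literally true, because $\Xi(\omega_{I,J})$ applied to the other summands $\Xi(\omega_{I',J'}^{-1})(\nu_{I',J'}\otimes\cdots)$ produces $(q^{-1}-q)$-correction terms, some of which do land back in $\underline{V}_{\underline{l},\emptyset}^{\otimes m}$ and could in principle cancel $\nu_{I_0,J_0}$.

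The gap is that your proposed extremal choice runs in the wrong direction. In the inductive construction of Lemma~\ref{lem: 3.2}, $\omega_{I,J}$ contains exactly $\#J$ factors $T_0^{\pm1}$, and each application of $T_0^{\pm1}$ changes the number of tensor slots equal to $\eta_{-r-\frac{3}{2}}$ by at most one. Consequently the leakage of $v_{I,J}$ under $\Xi(x_e^{(l)})\Xi(\omega_{I_0,J_0})$ vanishes automatically when $\#J>\#J_0$, but need \emph{not} vanish when $\#J<\#J_0$. For instance, with $m=2$, $l=0$, $(I_0,J_0)=(\emptyset,\{1,2\})$, $(I,J)=(\emptyset,\emptyset)$, a short computation with (\ref{eq: 1.2.2}) gives a leakage coefficient $q^3-q\neq 0$. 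So you should take $\#J_0$ \emph{minimal} (among pairs with $v_{I_0,J_0}\neq 0$), not maximal, and supply a finer order to break ties within each $\#J$-stratum. Alternatively, you can dispense with the extremal choice altogether: the $\bbq(q)$-linear map $\underline{V}_l^{\otimes m}\longrightarrow\bigoplus_{I,J}\underline{V}_{\underline{l},\emptyset}^{\otimes m}$, $v\mapsto\bigl(\Xi(x_e^{(l)})\Xi(\omega_{I,J})(v)\bigr)_{I,J}$, has matrix entries in $\bbz[q,q^{-1}]$ in the monomial basis, and at $q=1$ it degenerates to a direct sum of the coordinate permutations $\underline{V}_{I,J}^{\otimes m}\to\underline{V}_{\underline{l},\emptyset}^{\otimes m}$, hence its determinant is a nonzero Laurent polynomial; since $q$ is transcendental this determinant does not vanish, the map is injective, and some component $\Xi(x_e^{(l)})\Xi(\omega_{I,J})(v)$ is nonzero for every nonzero $v\in D^\pi_l$. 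This second route justifies your closing remark about the role of transcendence, but as written your proposal conflates the two mechanisms and picks the wrong extremum.
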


\begin{proof}
	We first prove $D^{\pi}_{l}$ is an
	$\dha_m$-module.
    It is sufficient to show that $D^\pi_l$ is  stabilized by $T_i$, with $i=0, 1, \cdots, m-1$, and $x_\sigma^{(l)}$, with $\sigma\in \fsl$ for $0\protect\leq l\protect\leq m$.

     From the proof of Lemma \ref{lem: 3.2}, $\Xi(\omega_{I,J}^{-1})$ only changes the positions of some tensor factors and takes some $\eta_{r+\frac{3}{2}}$ to $\eta_{-r-\frac{3}{2}}$.      	
	The action of $T_0$ preserves $S^\pi_{I,J}$ if $1\in I$ and takes $S^\pi_{I,J}$ to $S^\pi_{I,J'}$ for some subset $J'$ if $1\notin I$.	
	Now consider $T_i$ with $i>0$. Note that $-r-\frac{3}{2}<k<r+\frac{3}{2}$ for any $k\in\mathbb{I}_{2r+2}$. If both $i$ and $i+1$  belong to $I, J$, or $\underline{m}\backslash(I\cap J)$, the action of $T_i$ preserves  $S^\pi_{I,J}$. Otherwise,  the action of $T_i$ or $T_i^{-1}$ on $S^\pi_{I,J}$ takes $S^\pi_{I,J}$ to $S^\pi_{I',J'}$ for some subsets $I'$ and $J'$ with $\#I'=\#I=l$.	
    In summary, $D^{\pi}_{l}$ is stable under the actions of all $T_i$.

    By definition, $\Xi(x_{\sigma}^{(k)})(D^{\pi}_{l})=0$ for $k\neq l$
	and $\Xi(x_{\sigma}^{(l)})$ only has nonzero image on $S^\pi\otimes\eta_{r+\frac{3}{2}}^{\otimes m-l}$.	
     As in the proof of Lemma \ref{lem commutate},
	$\Xi(x_{\sigma}^{(l)})(S^\pi\otimes\eta_{r+\frac{3}{2}}^{\otimes m-l})=(\Psi_{l}^{V}(H_{\sigma})S^\pi)\otimes\eta_{r+\frac{3}{2}}^{\otimes m-l}$ for irreducible $\calh(\tsb_l)$-module $S^\pi$. It shows that $x_{\sigma}^{(l)}$
	stabilizes $D^{\pi}_{l}$.
	 We get therefore $D^{\pi}_{l}$ is an $\dha_m$-module.
	
	We then show that $D^{\pi}_{l}$ is irreducible as an $\dha_m$-module.
	It is enough to show that for any nonzero vector $\nu\in D^{\pi}_{l}$, the $\dha_m$-submodule generated by $\nu$ coincides with $D^{\pi}_{l}$, the former of which is denoted by $N$ in the following.
	
	Firstly, we can write  $$\nu=\underset{\#I=l,J\subset\underline{m}\backslash I}{\sum}\Xi(\omega_{I,J}^{-1})(\nu_{I,J}\otimes\eta_{r+\frac{3}{2}}^{\otimes m-l}),$$
    where $\nu_{I,J}\in S^\pi$. By definition, $\Xi(x_{e}^{(l)})\Xi(\omega_{I,J})(\nu)=\nu_{I,J}\otimes\eta_{r+\frac{3}{2}}^{\otimes m-l}$ for the identity element $e\in\fsl$. Recall that $S^\pi$ is an irreducible
	$\calh(\tsb_{l})$-module,  and
	\begin{align*}
		\Xi(x_{\sigma}^{(l)})(\nu_{I,J}\otimes\eta_{r+\frac{3}{2}}^{\otimes m-l})&=(\Psi_{l}^{V}(H_{\sigma})\nu_{I,J})\otimes\eta_{r+\frac{3}{2}}^{\otimes m-l} \text{ for }\sigma\in \fsl,\cr
		\Xi(T_{0})(\nu_{I,J}\otimes\eta_{r+\frac{3}{2}}^{\otimes m-l})&=(\Psi_{l}^{V}(H_{0})\nu_{I,J})\otimes\eta_{r+\frac{3}{2}}^{\otimes m-l}.
	\end{align*}
	By the action of $\Xi(x_{\sigma}^{(l)})$ with $\sigma$ running over $\fsl$, along with  $\Xi(T_{0})$,
	it is readily known that $N$
	contains the subspace $S^\pi\otimes\eta_{r+\frac{3}{2}}^{\otimes m-l}$, which is the generator of
	 $D^{\pi}_{l}$. Hence $N$ coincides with $D^{\pi}_{l}$. We conclude that
	$D^{\pi}_{l}$ is an irreducible $\dha_m$-module.
\end{proof}

\begin{corollary}\label{cor: 3.4} If $q$ is transcendental
over $\mathbb{Q}$. Then $\underline{V}^{\otimes m}$
is a completely reducible $\dha_m$-module.
\end{corollary}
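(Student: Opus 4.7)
The plan is to obtain this corollary as an essentially immediate consequence of Lemma \ref{lem: 3.3} together with the direct sum decomposition of $\underline{V}^{\otimes m}$ recorded just before Lemma \ref{lem: 3.3}. The role of the transcendence of $q$ has already been absorbed into its hypothesis (via the semisimplicity of $V^{\otimes l}$ as an $\calh(\tsb_l)$-module, cf.\ \cite{DJ}), so no further use of this assumption should be needed here.

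First I would recall the chain of decompositions
$$
\underline{V}^{\otimes m}
= \bigoplus_{l=0}^m \underline{V}_l^{\otimes m}
= \bigoplus_{l=0}^m \bigoplus_{\pi \in I_l} D^{\pi}_l,
$$
where $\underline{V}_l^{\otimes m}$ is defined in \S\ref{subsection2.2}, and the inner sum is the one derived immediately before Lemma \ref{lem: 3.3} by transporting the decomposition $V^{\otimes l} = \bigoplus_{\pi\in I_l} S^\pi$ through the linear isomorphisms $\Xi(\omega_{I,J}^{-1}): \underline{V}_{\underline{l},\emptyset}^{\otimes m} \to \underline{V}_{I,J}^{\otimes m}$. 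This is a direct sum decomposition at the level of $\bbq(q)$-vector spaces.

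Next, I would invoke Lemma \ref{lem: 3.3}, which says that each $D^{\pi}_l$ is stable under the generators $T_i$ and $x_\sigma^{(k)}$ of $\dha_m$, and is moreover irreducible as an $\dha_m$-module. Thus each summand is in fact an irreducible $\dha_m$-submodule of $\underline{V}^{\otimes m}$, which exhibits $\underline{V}^{\otimes m}$ as a direct sum of irreducible $\dha_m$-modules. Complete reducibility follows at once.

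I do not anticipate any genuine obstacle: the entire argument is a bookkeeping assembly of Lemma \ref{lem: 3.3} with the preceding decomposition, and no new construction, estimate, or invocation of the transcendence of $q$ is required beyond what has already been used upstream.
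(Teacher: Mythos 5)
Your proposal is correct and matches the paper's own proof exactly: both simply recall the decomposition $\underline{V}^{\otimes m}=\bigoplus_{l=0}^{m}\bigoplus_{\pi\in I_{l}} D^{\pi}_{l}$ established before Lemma \ref{lem: 3.3} and then invoke that lemma to conclude each summand is an irreducible $\dha_m$-module.
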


\begin{proof}
Recall that
$$\underline{V}^{\otimes m}=\bigoplus^m_{l=0}\underline{V}_{l}^{\otimes m}=\bigoplus_{l=0}^{m}\bigoplus_{\pi\in I_{l}} D^{\pi}_{l}.$$
By Lemma \ref{lem: 3.3}, $\underline{V}^{\otimes m}$\textit{ }is a completely
reducible $\dha_m$-module.
\end{proof}

\begin{theorem} \label{thm: main thm2}
If $q$ is transcendental
over $\mathbb{Q}$, then  $\Xi(\dha_m)=\End_{L_{q}^{\imath}(\mathfrak{s}\mathfrak{l}_{2r+4})}(\underline{V}^{\otimes m})^{\rm{op}}$.
\end{theorem}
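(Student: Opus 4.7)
The plan is to reduce Theorem \ref{thm: main thm2} to Theorem \ref{thm: main thm1} via the classical double centralizer theorem applied to the pair $\bigl(\Xi(\dha_m), \Phi(L_q^{\imath}(\sll_{2r+4}))\bigr)$ acting on $\uV^{\otimes m}$. All the substantive work has already been done: Corollary \ref{cor: 3.4} provides the semisimplicity, Theorem \ref{thm: main thm1} provides the centralizer identification in one direction, and Lemma \ref{lem commutate} confirms the mutual commutation.

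Concretely, observe first that $\Xi(\dha_m)$ is by construction a subalgebra of the finite-dimensional algebra $\End_{\bbq(q)}(\uV^{\otimes m})$; in particular it acts faithfully on $\uV^{\otimes m}$, and because $\Xi$ is an anti-homomorphism (Proposition \ref{prop1}), the right $\dha_m$-invariants on $\uV^{\otimes m}$ coincide with the left $\Xi(\dha_m)$-invariants. Corollary \ref{cor: 3.4} therefore says that $\uV^{\otimes m}$ is semisimple as a $\Xi(\dha_m)$-module, while Theorem \ref{thm: main thm1} says that
\[
\End_{\Xi(\dha_m)}(\uV^{\otimes m}) \;=\; \End_{\dha_m}(\uV^{\otimes m}) \;=\; \Phi(L_q^{\imath}(\sll_{2r+4})).
\]

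Next, apply the double centralizer theorem in its standard form: if $A$ acts faithfully and semisimply on a finite-dimensional module $M$, then $\End_{\End_A(M)}(M) = A$. With $A = \Xi(\dha_m)$ and $M = \uV^{\otimes m}$, this yields
\[
\End_{\Phi(L_q^{\imath}(\sll_{2r+4}))}(\uV^{\otimes m}) \;=\; \Xi(\dha_m).
\]
Since the $L_q^{\imath}(\sll_{2r+4})$-module structure on $\uV^{\otimes m}$ is defined via $\Phi$, the left-hand side coincides with $\End_{L_q^{\imath}(\sll_{2r+4})}(\uV^{\otimes m})$ as a subset of $\End_{\bbq(q)}(\uV^{\otimes m})$.

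The appearance of $\textsf{op}$ in the statement is simply bookkeeping: because $\Xi$ is an anti-homomorphism, the algebra structure inherited by $\Xi(\dha_m)$ as a subalgebra of $\End_{\bbq(q)}(\uV^{\otimes m})$ is antiisomorphic to $\dha_m$, and matches the opposite of the centralizer $\End_{L_q^{\imath}(\sll_{2r+4})}(\uV^{\otimes m})$. No serious obstacle is anticipated, since the transcendence hypothesis on $q$ has already been absorbed into Corollary \ref{cor: 3.4} and Theorem \ref{thm: main thm1}; the argument here is purely formal.
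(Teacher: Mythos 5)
Your proposal is correct and follows exactly the paper's argument: the paper likewise invokes the classical double commutant theorem (citing Goodman--Wallach) together with Theorem \ref{thm: main thm1} for the centralizer identification and Corollary \ref{cor: 3.4} for the semisimplicity of $\underline{V}^{\otimes m}$ over $\dha_m$. The paper simply states this in one line, whereas you have spelled out the bookkeeping around $\Xi$ being an anti-homomorphism and the role of $\mathrm{op}$, which is a harmless (and clarifying) elaboration of the same route.
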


\begin{proof} By Theorem \ref{thm: main thm1} and Corollary \ref{cor: 3.4}, we
can use classical double commutant theorem (see \cite[\S 4.1.13]{GW})
to reach this conclusion.
\end{proof}

\begin{remark} (1) Recently, Di Wang developed a ``super" version of degenerate duplex Hecke algebras and related Schur-Sergeev duality of Levi-type (see \cite{Wd}).
	
(2) In the case of odd dimensional vector spaces, the duality still holds and the constructions are analogously.
\end{remark}

\end{document}